\documentclass[11pt]{article}
\usepackage{latexsym,amsfonts,amssymb,amsmath,amsthm}
\usepackage{graphicx}

\usepackage[usenames,dvipsnames]{color}
\usepackage{ulem}
\pdfoutput=1

\parindent 0.5cm
\evensidemargin 0cm \oddsidemargin 0cm \topmargin 0cm \textheight 22cm \textwidth 16cm \footskip 2cm \headsep
0cm

\begin{document}
\setlength{\baselineskip}{16pt}

\parindent 0.5cm
\evensidemargin 0cm \oddsidemargin 0cm \topmargin 0cm \textheight 22cm \textwidth 16cm \footskip 2cm \headsep
0cm

\newtheorem{theorem}{Theorem}[section]
\newtheorem{lemma}{Lemma}[section]
\newtheorem{proposition}{Proposition}[section]
\newtheorem{definition}{Definition}[section]
\newtheorem{example}{Example}[section]
\newtheorem{corollary}{Corollary}[section]

\newtheorem{remark}{Remark}[section]

\numberwithin{equation}{section}

\def\p{\partial}
\def\I{\textit}
\def\R{\mathbb R}
\def\C{\mathbb C}
\def\u{\underline}
\def\l{\lambda}
\def\a{\alpha}
\def\O{\Omega}
\def\e{\epsilon}
\def\ls{\lambda^*}
\def\D{\displaystyle}
\def\wyx{ \frac{w(y,t)}{w(x,t)}}
\def\imp{\Rightarrow}
\def\tE{\tilde E}
\def\tX{\tilde X}
\def\tH{\tilde H}
\def\tu{\tilde u}
\def\d{\mathcal D}
\def\aa{\mathcal A}
\def\DH{\mathcal D(\tH)}
\def\bE{\bar E}
\def\bH{\bar H}
\def\M{\mathcal M}
\renewcommand{\labelenumi}{(\arabic{enumi})}

\def\disp{\displaystyle}
\def\undertex#1{$\underline{\hbox{#1}}$}
\def\card{\mathop{\hbox{card}}}
\def\sgn{\mathop{\hbox{sgn}}}
\def\exp{\mathop{\hbox{exp}}}
\def\OFP{(\Omega,{\cal F},\PP)}
\newcommand\JM{Mierczy\'nski}
\newcommand\RR{\ensuremath{\mathbb{R}}}
\newcommand\CC{\ensuremath{\mathbb{C}}}
\newcommand\QQ{\ensuremath{\mathbb{Q}}}
\newcommand\ZZ{\ensuremath{\mathbb{Z}}}
\newcommand\NN{\ensuremath{\mathbb{N}}}
\newcommand\PP{\ensuremath{\mathbb{P}}}
\newcommand\abs[1]{\ensuremath{\lvert#1\rvert}}

\newcommand\normf[1]{\ensuremath{\lVert#1\rVert_{f}}}
\newcommand\normfRb[1]{\ensuremath{\lVert#1\rVert_{f,R_b}}}
\newcommand\normfRbone[1]{\ensuremath{\lVert#1\rVert_{f, R_{b_1}}}}
\newcommand\normfRbtwo[1]{\ensuremath{\lVert#1\rVert_{f,R_{b_2}}}}
\newcommand\normtwo[1]{\ensuremath{\lVert#1\rVert_{2}}}
\newcommand\norminfty[1]{\ensuremath{\lVert#1\rVert_{\infty}}}
\newcommand{\ds}{\displaystyle}

\title{Coexistence and Extinction   in   Time-Periodic Volterra-Lotka Type Systems with Nonlocal Dispersal}

\author{
Tung Nguyen\\
Department of Mathematical Sciences\\
University of Illinois Springfield\\
Springfield, IL 62703  \\
\\
Nar Rawal\\
Department of Mathematics\\
Hampton University,
 Hampton, VA 23668
 }

\vskip 30mm

\date{}
\maketitle

\noindent {\bf Abstract.} This paper deals with  coexistence and extinction of  time periodic Volterra-Lotka  type competing systems with nonlocal  dispersal. Such issues have already been studied for  time independent systems with nonlocal  dispersal and  time periodic  systems with random dispersal, but have not been studied yet for  time periodic  systems with nonlocal dispersal.  In this paper, the relations between the coefficients representing Malthusian growths, self regulations and competitions of the two species have been obtained  which ensure coexistence and extinction   for the time periodic Volterra-Lotka  type system with nonlocal dispersal.  The underlying environment of the  Volterra-Lotka  type system under consideration has either hostile surroundings, or  non-flux boundary, or is spatially periodic.

\bigskip

\noindent {\bf Key words.} Time periodic Volterra-Lotka system, nonlocal  dispersal, coexistence, extinction.
\bigskip

\noindent {\bf Mathematics subject classification.} 45C05, 45G10, 45M20, 47G10, 92D25.

\newpage

\section{Introduction}
\setcounter{equation}{0}

Several models have already been derived or have still  being derived to connect mathematics with ecology by many mathematicians and ecologists. Among them Volterra (1860-1940) and Lotka (1880-1949) are the two who contributed  a model  (in 1925) which is well known as  competition model of
two species, i.e.
\begin{equation}
\label{vol-lot-mod}
\begin{cases}
u_t= u(a_1-b_1u-c_1v), \\
v_t= v(a_2-b_2u-c_2v).
\end{cases}
\end{equation}
Since then  it has  drawn special attention of many mathematicians and ecologists  on the following diffusive Volterra-Lotka type two species competition system,
 \begin{equation}
\label{vol-lot-ran-disp}
\begin{cases}
u_t=\nu_1\Delta u + u(a_1(t,x)-b_1(t,x)u-c_1(t,x)v),\quad x\in D  \\
v_t=\nu_2 \Delta v + v(a_2(t,x)-b_2(t,x)u-c_2(t,x)v),\quad  x\in D\\
Bu=Bv=0,\quad x\in \p D,
\end{cases}
\end{equation}
where $\nu_1,\nu_2$ are positive constants, $a_i,b_i,c_i$ $(i=1,2$) are positive smooth functions, $D\subset\RR^N$
is a smooth bounded domain, and $Bu=Bv=0$ are proper boundary conditions. Ecologically, the functions $a_1$, $a_2$ represent the respective growth
rates of the two species, $b_1$, $c_2$ account for self-regulation of the respective species, and $c_1$, $b_2$
account for competition between the two species.
Among those literatures on \eqref{vol-lot-ran-disp}, many were published during $1980s$ (see \cite{CoLa},  \cite{Hi}, \cite{Le}, \cite{Pa}, etc.) and $ 2000 s$ (see \cite{CaCo},  \cite{HeSh},  \cite{HuLoMi}, \cite{HuMiPo},  etc.).

The differential operator  $u\mapsto \Delta u$ in \eqref{vol-lot-ran-disp} describes the random movements of individuals between adjacent locations and is therefore
  also referred to as a {\it random dispersal operator}.
In  reality,
  interactions or movements of individuals of the underlying systems occur between adjacent as well as non-adjacent locations.
Certain integral operators,   which are referred to as {\it nonlocal dispersal operators}, are used to describe nonlocal interactions of individuals in ecology
(see \cite{BaZh}, \cite{ChChRo}, \cite{Cov1}, \cite{Cov}, \cite{Fif}, \cite{GrHiHuMiVi}, \cite{HMMV}, etc.). Recently, a lot
of attention has  been paid to the following Volterra-Lotka type two species competition systems with nonlocal dispersal,
\begin{equation}
\label{vol-lot-non-disper-DC}
\begin{cases}
u_t=\nu_1[\int_{D} \kappa(y-x)u(t,y)dy-u(t,x)]+ u(a_1(t,x)-b_1(t,x)u-c_1(t,x)v),\quad x\in\bar D \cr
v_t=\nu_2[\int_{D} \kappa(y-x)u(t,y)dy-u(t,x)]+ v(a_2(t,x)-b_2(t,x)u-c_2(t,x)v),\quad x\in\bar D, \cr
\end{cases}
\end{equation}
\begin{equation}
\label{vol-lot-non-disper-NC}
\begin{cases}
u_t=\nu_1\int_{D} \kappa(y-x)[u(t,y)-u(t,x)]dy+ u(a_1(t,x)-b_1(t,x)u-c_1(t,x)v),\quad x\in\bar D \cr
v_t=\nu_2\int_{D} \kappa(y-x)[u(t,y)-u(t,x)]dy+ v(a_2(t,x)-b_2(t,x)u-c_2(t,x)v),\quad x\in\bar D, \cr
\end{cases}
\end{equation}
and
\begin{equation}
\label{vol-lot-non-disper-PC}
\begin{cases}
u_t=\nu_1 \int_{\RR^N} \kappa(y-x)[u(t,y)-u(t,x)]dy+ u(a_1(t,x)-b_1(t,x)u-c_1(t,x)v),\quad x\in \RR^N \cr
v_t=\nu_2\int_{\RR^N} \kappa(y-x)[u(t,y)-u(t,x)]dy+ v(a_2(t,x)-b_2(t,x)u-c_2(t,x)v),\quad x\in  \RR^N \cr
u(t,x+p_k{\bf e_k})=u(t,x),\quad v(t,x+p_k{\bf e_k})=v(t,x),\quad k=1,2, \cdots,N,
\end{cases}
\end{equation}
where $\kappa(\cdot)$ is a nonnegative symmetric  smooth function with support $B(0,r)=\{x\in\RR^N\,|\, \|x\|<r\}$ for some $r>0$,
$\int_{\RR^N}\kappa(z)dz=1$, and in \eqref{vol-lot-non-disper-PC}, $a_i(t,x+p_k{\bf e_k})=a_i(t,x)$, $b_i(t,x+p_k{\bf e_k})=b_i(t,x)$,
and $c_i(t,x+p_k{\bf e_k})=c_i(t,x)$ for $i=1,2$, $k=1,2,\cdots,N$, where $p_k>0$ (see \cite{BaLi}, \cite{BaLiSh}, \cite{HeNgSh}, \cite{LiZhZh}, etc.).
 We point out that the works \cite{KaLoSh1} and \cite{KaLoSh2} considered two species competition systems which involves both random and
nonlocal dispersals.

Thanks to the relation between the nonlocal dispersal operator in \eqref{vol-lot-non-disper-DC} (resp. \eqref{vol-lot-non-disper-NC},  \eqref{vol-lot-non-disper-PC}) and the random dispersal operator in
\eqref{vol-lot-ran-disp} with Dirichlet boundary condition (resp.  Neumann boundary condition, periodic boundary condition) (see \cite{CoElRo}, \cite{CoElRoWo}, \cite{ShXi}), \eqref{vol-lot-non-disper-DC} (resp. \eqref{vol-lot-non-disper-NC},  \eqref{vol-lot-non-disper-PC})
is referred to as a Volterra-Lotka type competition system with nonlocal dispersal and Dirichlet type boundary condition
(resp. Neumann type boundary condition, periodic boundary condition).

Coexistence and extinction dynamics is among the central problems investigated for \eqref{vol-lot-ran-disp}, \eqref{vol-lot-non-disper-DC}, \eqref{vol-lot-non-disper-NC},  and \eqref{vol-lot-non-disper-PC}.
Roughly,  we say that \eqref{vol-lot-ran-disp} with time periodic coefficients has a {\it coexistence state} if it has a time periodic solution
$(u^{**}(t,x),v^{**}(t,x))$ with $u^{**}(t,x),v^{**}(t,x)>0$ for $x\in D$. We say that the {\it species $v$ is
eventually driven to extinction} if $\lim_{t\to\infty}v(t,x)=0$ holds for every solution $(u(t,x),v(t,x))$ of
\eqref{vol-lot-ran-disp} with $u(t,x)>0$, $v(t,x)>0$. Note that, by the regularity of solutions for parabolic equations, a coexistence state
$(u^{**}(t,x),v^{**}(t,x))$ of \eqref{vol-lot-ran-disp} (if exists) is continuous in $x$. We say that \eqref{vol-lot-non-disper-DC}, (resp. \eqref{vol-lot-non-disper-NC},  \eqref{vol-lot-non-disper-PC}) with time periodic coefficients has a {\it coexistence state} if it has a time periodic solution
$(u^{**}(t,x),v^{**}(t,x))$ with $u^{**}(t,x),v^{**}(t,x)>0$ for $x\in \bar D$ and being continuous in $x\in\bar D$ (in the case \eqref{vol-lot-non-disper-PC},
$x\in\RR^N$). We say that the {\it species $v$ is
eventually driven to extinction} if $\lim_{t\to\infty}v(t,x)=0$ holds for every solution $(u(t,x),v(t,x))$ of
\eqref{vol-lot-non-disper-DC}, (resp. \eqref{vol-lot-non-disper-NC},  \eqref{vol-lot-non-disper-PC}) with $u(t,x)>0$, $v(t,x)>0$.

 There are many studies on  the coexistence and extinction dynamics of \eqref{vol-lot-ran-disp}
with $a_i$, $b_i$, and $c_i$ being time independent or periodic
 (see \cite{AhLa},  \cite{CoLa}, \cite{FuLG},
\cite{HeSh},  \cite{HsSmWa}, \cite{XQZh},  \cite{ZhPo}, etc.). In \cite{HeNgSh}, the authors studied coexistence and extinction dynamics of  \eqref{vol-lot-non-disper-DC}, \eqref{vol-lot-non-disper-NC},  and \eqref{vol-lot-non-disper-PC} with $a_i$, $b_i$, and $c_i$ being  independent
of $t$.
Consider the following  spectral problems,
\begin{equation} \label{eigenvalue-eq0} \int_D \kappa(y)u(y)dy-u(x)=\lambda u(x),\quad x\in\bar
D,\ u\in C(\bar
D), \end{equation}
\begin{equation} \label{eigenvalue-eq0a} \int_D \kappa(y-x)[u(y)-u(x)]dy=\lambda u(x),\quad x\in\bar
D,\  u\in C(\bar
D), \end{equation}
and
\begin{equation} \label{eigenvalue-eq0b} \int_{\RR^N} \kappa(y-x)[u(y)-u(x)]dy=\lambda u(x),\quad x\in\bar
\RR^N,\ u\in C(
\RR^N). \end{equation}
Let $\lambda_0^D,\quad  \lambda_0^N,\quad \lambda_0^P$ be the principal eigenvalues of \eqref{eigenvalue-eq0}, \eqref{eigenvalue-eq0a}, and \eqref{eigenvalue-eq0b}, respectively. See section 2 for the existence of $\lambda_0^D$,  $\lambda_0^N$ and $\lambda_0^P$. It should be noted that $\lambda_0^D<0$, $\lambda_0^N=0$,  and $\lambda_0^P=0$.

 Set
\begin{equation}
\label{coefficient-eq}
\begin{cases}
a_{iL(M)}=\inf_{t\in\RR,x\in \bar D } (\sup_{t\in\RR,x\in \bar D})a_i(t,x)\cr b_{iL(M)}=\inf_{t\in\RR,x\in \bar D } (\sup_{t\in\RR,x\in \bar
D})b_i(t,x)\cr c_{iL(M)}=\inf_{t\in\RR,x\in \bar D } (\sup_{t\in\RR,x\in \bar D})c_i(t,x)
\end{cases}
\end{equation}
for $i=1,2$, where $D=\RR^N$ in the case of \eqref{vol-lot-non-disper-PC}.
The following two theorems on  the coexistence and extinction  of time independent competing systems with nonlocal dispersal are proved in \cite{HeNgSh}.

\medskip

\noindent {\bf Theorem A'}  (Coexistence states) {\it Consider \eqref{vol-lot-non-disper-DC} with $a_i,b_i,c_i$ ($i=1,2$) being independent of $t$ and
assume that $a_{iL}>-\nu_i \lambda_0^D$ for $i=1,2$.
\begin{itemize}
\item[(1)] If $a_{1L}>-\nu_1 \lambda_0^D+\frac{c_{1M}a_{2M}}{c_{2L}}$ and $a_{2L}>-\nu_2
\lambda_0^D+\frac{b_{2M}a_{1M}}{b_{1L}}$, then \eqref{vol-lot-non-disper-DC} has at least one coexistence state
$(u^{**}(x),v^{**}(x))$.

\item[(2)] If  $\nu_1=\nu_2$, $a_1(x)=a_2(x)$, and $b_1(x)>b_2(x)$, $c_1(x)<c_2(x)$ for $x\in\bar
D$, then \eqref{vol-lot-non-disper-DC} has at least one coexistence state $(u^{**}(x),v^{**}(x))$.

\item[(3)]  If  $\nu_1=\nu_2$, $a_1(x)=a_2(x)$ for $x\in \bar D$, and $b_i$, $c_i$ $(i=1,2)$ are
constant functions with $b_1>b_2$ and $c_1<c_2$, then \eqref{vol-lot-non-disper-DC} has a unique globally stable
coexistence state $(u^{**}(x),v^{**}(x))$.
\end{itemize}
}

\noindent {\bf Theorem B'}  (Extinction) {\it Consider \eqref{vol-lot-non-disper-DC} with $a_i,b_i,c_i$ ($i=1,2$) being independent of $t$ and
assume that $a_{iL}>-\nu_i \lambda_0^D$ for $i=1,2$.
\begin{itemize}
\item[(1)]  If $a_{1L}> \frac{c_{1M}a_{2M}}{c_{2L}}$, $a_{2M}\leq \frac{a_{1L}b_{2L}}{b_{1M}}$,
$\nu_1= \nu_2$, and  $a_{1L}\geq a_{2M}$, then  species $v$ is
eventually driven to extinction.

\item[(2)] If $a_{1M}\leq \frac{c_{1L}a_{2L}}{c_{2M}}$, $a_{2L}> \frac{a_{1M}b_{2M}}{b_{1L}}$,
$\nu_1= \nu_2$, and  $a_{1M}\leq a_{2L}$, then  species $u$ is
eventually driven to extinction.

\item[(3)] If  $\nu_1<\nu_2$, and $a_1(x)=a_2(x)$, $b_1(x)=b_2(x)$,
$c_1(x)=c_2(x)$, then  species $v$ is
eventually driven to extinction.
\end{itemize}
}

Similar results to Theorem A' and Theorem B' have been proved in \cite{HeNgSh} for \eqref{vol-lot-non-disper-NC} and
\eqref{vol-lot-non-disper-PC} (see Theorems C, D, E, and F in \cite{HeNgSh}).

Up to  our knowledge, there is little study on the coexistence and extinction dynamics of \eqref{vol-lot-non-disper-DC},
\eqref{vol-lot-non-disper-NC} and
\eqref{vol-lot-non-disper-PC} with time periodic coefficients. The objective of this paper is to study the coexistence and extinction dynamics of \eqref{vol-lot-non-disper-DC},
\eqref{vol-lot-non-disper-NC} and
\eqref{vol-lot-non-disper-PC} with time periodic coefficients.
 Throughout the rest of this paper, $D=\RR^N$ when \eqref{vol-lot-non-disper-PC}
is considered. We  recall that the following results are proved in \cite{RaSh}.
\begin{itemize}
\item Consider \eqref{vol-lot-non-disper-DC} (resp.
\eqref{vol-lot-non-disper-NC},
\eqref{vol-lot-non-disper-PC}) and assume $a_{iL}>-\nu_i \lambda_0^D$ (resp. $a_{iL}>-\nu_i \lambda_0^N$, $a_{iL}>-\nu_i \lambda_0^P$)
for $i=1,2$. Then \eqref{vol-lot-non-disper-DC} (resp.
\eqref{vol-lot-non-disper-NC},
\eqref{vol-lot-non-disper-PC})
has a  semitrivial  time periodic solution $(u^*(t,\cdot),0)\in (C(\bar
D,\ensuremath{\mathbb{R}})\setminus\{0\})\times C(\bar D,\ensuremath{\mathbb{R}})$ which is globally semi-stable in the sense that for any $u_0\in
C(\bar D,\ensuremath{\mathbb{R}})$ with  $u_0\geq 0$ and $u_0\not\equiv 0$, $(u(t,\cdot;u_0,0),v(t,\cdot;u_0,0))-(u^*(t,\cdot),0)\to (0,0)$ as
$t\to\infty$ (see Proposition \ref{semi-trivial-prop}).

\item Consider \eqref{vol-lot-non-disper-DC} (resp.
\eqref{vol-lot-non-disper-NC},
\eqref{vol-lot-non-disper-PC}) and assume $a_{iL}>-\nu_i \lambda_0^D$ (resp. $a_{iL}>-\nu_i \lambda_0^N$, $a_{iL}>-\nu_i \lambda_0^P$)
for $i=1,2$. Then \eqref{vol-lot-non-disper-DC} (resp.
\eqref{vol-lot-non-disper-NC},
\eqref{vol-lot-non-disper-PC}) has a  semitrivial time periodic solution $(0,v^*(t,\cdot))\in C(\bar
D,\ensuremath{\mathbb{R}})\times(C(\bar D,\ensuremath{\mathbb{R}})\setminus\{0\})$ which is globally semi-stable in the sense that for any $v_0\in
C(\bar D,\ensuremath{\mathbb{R}})$ with $v_0\geq 0$ and $v_0\not\equiv 0$, $(u(t,\cdot;0,v_0)$, $v(t,\cdot;0,v_0))- (0,v^*(t,\cdot))\to (0,0)$ as
$t\to\infty$ (see Proposition \ref{semi-trivial-prop}).
\end{itemize}

We will prove the following theorems in this paper.

\medskip

\noindent {\bf Theorem A.}  (Coexistence states) {\it Consider \eqref{vol-lot-non-disper-DC} $($resp. \eqref{vol-lot-non-disper-NC}, \eqref{vol-lot-non-disper-PC}$)$ and assume that $a_i$, $b_i$ and $c_i$ are  periodic in $t$ with period $T$ and
$a_{iL}>-\nu_i \lambda_0^D$ $($resp. $a_{iL}>-\nu_i \lambda_0^N$, $a_{iL}>-\nu_i \lambda_0^P$$)$ for $i=1,2$.
\begin{itemize}
\item[(1)] If $a_{1L}>-\nu_1 \lambda_0^D+\frac{c_{1M}a_{2M}}{c_{2L}}$ and $a_{2L}>-\nu_2
\lambda_0^D+\frac{b_{2M}a_{1M}}{b_{1L}}$ $($resp. $a_{1L}>-\nu_1 \lambda_0^N+\frac{c_{1M}a_{2M}}{c_{2L}}$ and $a_{2L}>-\nu_2
\lambda_0^N+\frac{b_{2M}a_{1M}}{b_{1L}}$, $a_{1L}>-\nu_1 \lambda_0^P+\frac{c_{1M}a_{2M}}{c_{2L}}$ and $a_{2L}>-\nu_2
\lambda_0^P+\frac{b_{2M}a_{1M}}{b_{1L}}$$)$, then \eqref{vol-lot-non-disper-DC} $($resp. \eqref{vol-lot-non-disper-NC}, \eqref{vol-lot-non-disper-PC}$)$ has at least one coexistence state
$(u^{**}(t,x),v^{**}(t,x))=(u^{**}(t+T,x),v^{**}(t+T,x)))$.

\item[(2)] If  $\nu_1=\nu_2$, $a_1(t,x)=a_2(t,x)$, and $\inf_{x\in\bar D}b_1(t,x)>\sup_{x\in\bar D}b_2(t,x)$, $\sup_{x\in\bar D}c_1(t,x)<\inf_{x\in\bar D}c_2(t,x)$ for $t\in\RR$, then \eqref{vol-lot-non-disper-DC} $($resp.  \eqref{vol-lot-non-disper-NC}, \eqref{vol-lot-non-disper-PC}$)$
    has at least one coexistence state $(u^{**}(t,x),v^{**}(t,x))=(u^{**}(t+T,x),v^{**}(t+T,x)))$.

\item[(3)]  If  $\nu_1=\nu_2$, $a_1(t,x)=a_2(t,x)$ for $x\in \bar D$ and $t\in\RR$, and $b_i$, $c_i$ $(i=1,2)$ are
constant functions with $b_1>b_2$ and $c_1<c_2$, then \eqref{vol-lot-non-disper-DC} $($resp.  \eqref{vol-lot-non-disper-NC}, \eqref{vol-lot-non-disper-PC}$)$ has a unique globally stable
coexistence state $(u^{**}(t,x),v^{**}(t,x))=(u^{**}(t+T,x),v^{**}(t+T,x)))$.
\end{itemize}
}

\noindent {\bf Theorem B.}  (Extinction) {\it Consider \eqref{vol-lot-non-disper-DC} $($resp.  \eqref{vol-lot-non-disper-NC}, \eqref{vol-lot-non-disper-PC}$)$ and assume that $a_i$, $b_i$ and $c_i$ are  periodic in $t$ with period $T$ and
$a_{iL}>-\nu_i \lambda_0^D$ $($resp. $a_{iL}>-\nu_i \lambda_0^N$, $a_{iL}>-\nu_i \lambda_0^P$$)$ for $i=1,2$.
\begin{itemize}
\item[(1)]  If $a_{1L}> \frac{c_{1M}a_{2M}}{c_{2L}}$, $a_{2M}\leq \frac{a_{1L}b_{2L}}{b_{1M}}$,
$\nu_1= \nu_2$, and  $a_{1L}\geq a_{2M}$, then $(u^*(t,x),0)$ is
globally stable and hence species $v$ is
eventually driven to extinction.

\item[(2)] If $a_{1M}\leq \frac{c_{1L}a_{2L}}{c_{2M}}$, $a_{2L}> \frac{a_{1M}b_{2M}}{b_{1L}}$,
$\nu_1= \nu_2$, and  $a_{1M}\leq a_{2L}$, then $(0,v^*(t,x))$ is
globally stable and hence species $u$ is
eventually driven to extinction.

\end{itemize}
}

The above results extend Theorem A' and Theorem B' for time independent Volterra-Lotka type two species competition system with nonlocal dispersal
to time periodic ones. They also extend the existing results on coexistence and extinction dynamics for time periodic Volterra-Lotka type two species competition system with random dispersal to such systems with nonlocal dispersal.

Note that ecologically, Theorem B' (3) indicates that in time independent and spatially inhomogeneous media, the species with slower diffusion
is selected for. Such scenario may not be true for two species competition systems with random dispersal in time periodic and spatially inhomogeneous media (see \cite{HuMiPo}). We conjecture that the scenario may also not be true for two species competition systems with nonlocal dispersal in time periodic and spatially inhomogeneous media.

It should be  pointed out that several difficulties arise in dealing with \eqref{vol-lot-non-disper-DC} $($resp.  \eqref{vol-lot-non-disper-NC}, \eqref{vol-lot-non-disper-PC}$)$ when following the
general approach for \eqref{vol-lot-ran-disp}. This is due to the fact that the solution operator of
\eqref{vol-lot-non-disper-DC} $($resp.  \eqref{vol-lot-non-disper-NC}, \eqref{vol-lot-non-disper-PC}$)$ lacks smoothness and compactness in suitable phase spaces. The main tools employed in the
study of \eqref{vol-lot-non-disper-DC},  \eqref{vol-lot-non-disper-NC}, and \eqref{vol-lot-non-disper-PC} include principal spectral theory for nonlocal dispersal operators with time periodic dependence,  comparison principle for  \eqref{vol-lot-non-disper-DC},  \eqref{vol-lot-non-disper-NC}, and \eqref{vol-lot-non-disper-PC}, and sub- and super-solutions.

The rest of this paper is organized as follows. In section 2, we present some preliminary materials for the use
in later sections.  Sections 3 and 4
are devoted to the proofs of Theorems A and B, respectively.

\section{Preliminary}

In this section, we present some preliminary materials for the use in later sections, including principal spectrum theory for nonlocal dispersal
operators with time periodic dependence,  semitrivial time periodic solutions of time periodic Volterra-Lotka type two species competition systems with
nonlocal dispersal, and comparison principal for Volterra-Lotka type two species competition systems with
nonlocal dispersal.

\subsection{Principal spectrum theory of nonlocal dispersal operators with time periodic dependence}

In this subsection, we present some principal spectrum theory for nonlocal dispersal operators with time periodic dependence.

Let
$$
X_1=X_2=C(\bar D,\RR)
$$
with norm $\|u\|_{X_i}=\sup_{x\in\bar D}|u(x)|$ ($i=1,2$),
$$
X_3=\{u\in C(\RR^N,\RR)\,|\, u(x+p_j{\bf e_j})=u(x)\}
$$
with norm $\|u\|_{X_3}=\sup_{x\in\RR^N}|u(x)|$, and
$$
X_i^+=\{u\in X_i\,|\, u\geq 0\},\quad i=1,2,3,
$$
$$X_i^{++}=\left[\begin{array}{ll}
\{u\in X_i^+\,|\, u(x)>0 \quad \forall \,\, x\in\bar D\},& i=1,2\\
\{u\in X_i^+\,|\, u(x)>0\quad \forall x\in\RR^N\},&i=3.
\end{array}\right.
$$
For given $\nu_i>0$ and  $l_i(\cdot)\in X_i\ (i=1, 2, 3)$, let $L_i^0(\nu_i,l_i): \mathcal{D}(L_i^0(\nu_i,l_i))\subset {X}_i\to {X}_i$
be defined as follows,
$$
(L_1^0(\nu_1,l_1)u)(x)=\nu_1\Big[\int_D
\kappa(y-x)u(y)dy-u(x)\Big]+l_1(x)u(x),
$$
$$
(L_2^0(\nu_2,l_2)u)(x)= \nu_2\Big[\int_D
\kappa(y-x)(u(y)-u(x))dy\Big]+l_2(x)u(x),
$$
and
$$
(L_3^0(\nu_3,l_3)u)(x)=\nu_3\Big[\int_{\RR^N}
\kappa(y-x)u(y)dy-u(x)\Big]+l_3(x)u(x).
$$

Let
$$
\mathcal{X}_1=\mathcal{X}_2=\{u\in C(\RR\times \bar D,\RR)\,|\,
u(t+T,x)=u(t,x)\}
$$
with norm $\|u\|_{\mathcal{X}_i}=\sup_{t\in\RR, x\in\bar D}|u(t,x)|$ ($i=1,2$),
$$
\mathcal{X}_3=\{u\in C(\RR\times\RR^N,\RR)\,|\,
u(t+T,x)=u(t,x+p_i{\bf e_i})=u(t,x)\}
$$
with norm $\|u\|_{\mathcal{X}_3}=\sup_{t\in\RR,x\in\RR^N}|u(t,x)|$, and
$$
\mathcal{X}_i^+=\{u\in\mathcal{X}_i\,|\, u\geq 0\},\quad i=1,2,3.
$$
For given $\nu_i>0$ and $l_i\in\mathcal{X}_i\ (i=1,2,3)$,
 let $L_i(\nu_i,l_i): \mathcal{D}(L_i(\nu_i,l_i))\subset \mathcal{X}_i\to \mathcal{X}_i$
be defined as follows,
$$
(L_1(\nu_1,l_1)u)(t,x)=- u_t(t,x)+\nu_1\Big[\int_D
\kappa(y-x)u(t,y)dy-u(t,x)\Big]+l_1(t,x)u(t,x),
$$
$$
(L_2(\nu_2,l_2)u)(t,x)= - u_t(t,x)+\nu_2\Big[\int_D
\kappa(y-x)(u(t,y)-u(t,x))dy\Big]+l_2(t,x)u(t,x),
$$
and
$$
(L_3(\nu_3,l_3)u)(t,x)=- u_t(t,x)+\nu_3\Big[\int_{\RR^N}
\kappa(y-x)u(t,y)dy-u(t,x)\Big]+l_3(t,x)u(t,x).
$$

\begin{definition}
\label{principal-spectrum-def}
\begin{itemize}
\item[(1)] Let
$$
\lambda_i^0(\nu_i,l_i)=\sup\{{\rm Re}\lambda\,|\, \lambda\in\sigma(L_i^0(\nu_i,l_i))\}
$$
for $i=1,2,3$, where $l_i\in X_i$. $\lambda_i^0(\nu_i,l_i)$ is called the {\rm principal spectrum
point} of $L_i^0(\nu_i,l_i)$ $(i=1,2,3)$. If $\lambda_i^0(\nu_i,l_i)$ is an isolated
eigenvalue of $L_i^0(\nu_i,l_i)$ with a positive eigenfunction $\phi$ (i.e.
$\phi\in {X}_i^+$), then $\lambda_i^0(\nu_i,a_i)$ is called the {\rm
principal eigenvalue} of $L_i^0(\nu_i,l_i)$ or it is said that {\rm
$L_i^0(\nu_i,l_i)$ has a principal eigenvalue} $(i=1,2,3)$.

\item[(2)]
Let
$$
\lambda_i(\nu_i,l_i)=\sup\{{\rm Re}\lambda\,|\, \lambda\in\sigma(L_i(\nu_i,l_i))\}
$$
for $i=1,2,3$, where $l_i\in\mathcal{X}_i$. $\lambda_i(\nu_i,l_i)$ is called the {\rm principal spectrum
point} of $L_i(\nu_i,l_i)$ $(i=1,2,3)$. If $\lambda_i(\nu_i,l_i)$ is an isolated
eigenvalue of $L_i(\nu_i,l_i)$ with a positive eigenfunction $\phi$ (i.e.
$\phi\in \mathcal{X}_i^+$), then $\lambda_i(\nu_i,a_i)$ is called the {\rm
principal eigenvalue} of $L_i(\nu_i,l_i)$ or it is said that {\rm
$L_i(\nu_i,l_i)$ has a principal eigenvalue} $(i=1,2,3)$.
\end{itemize}
\end{definition}

\begin{remark}
For given $1\le i\le 3$ and $l_i(\cdot,\cdot)\in\mathcal{X}_i$, if $l_i(t,x)=l_i(x)$, i.e., $l_i(t,x)$ is independent of
$t$, then $\lambda_i(\nu_i,l_i)=\lambda_i^0(\nu_i,l_i)$.
\end{remark}

For given $1\leq i\leq 3$ and $l_i\in {\mathcal X}_i$, let $\hat l_i(x)$ be the time average of $l_i(t,x)$ ($i=1,2,3$), that is,
\begin{equation}
\label{avg-a-eq}
\hat l_i(x)=\frac{1}{T}\int_0^T l_i(t,x)dt,\quad T>0
\end{equation}
and
\begin{equation}
\label{b-i-eq}
m_i(x)=\begin{cases}-\nu_i\quad {\rm for}\quad i=1,3\cr
-\nu_2\int_D \kappa(y-x)dy\quad {\rm for}\quad i=2.
\end{cases}
\end{equation}
Let
\begin{equation}
\label{domain-eq}
D_i=\begin{cases}
\bar D\quad {\rm for}\quad i=1,2\cr
[0,p_1]\times[0,p_2]\times\cdots\times[0,p_N]\quad {\rm for}\quad i=3.
\end{cases}
\end{equation}

\begin{proposition}
\label{pev-prop1}
Let $\nu_i>0$ and  $l_i\in {\mathcal X}_i$ ($1\leq i\leq 3$) be given. If $\lambda\in\RR$ is an eigenvalue of $L_i(\nu_i,l_i)$ with a positive eigenfunction $\phi(t,x)$, then $\lambda$ is the principal eigenvalue of $L_i(\nu_i,l_i)$. Moreover, $\lambda=\lambda_i(\nu_i,l_i)>\max_{x\in  D_i} (m_i(x)+\hat l_i(x))$. Conversely, if
$\lambda_i(\nu_i,l_i)>\max_{x\in D_i} (m_i(x)+\hat l_i(x))$, then $\lambda_i(\nu_i,l_i)$ is the principal eigenvalue of $L_i(\nu_i,l_i)$. Hence,
 $\lambda_i(\nu_i,l_i)$ is the principal
eigenvalue of $L_i(\nu_i,l_i)$ iff $\lambda_i(\nu_i,l_i)>\max_{x\in D_i} (m_i(x)+\hat l_i(x))$.
\end{proposition}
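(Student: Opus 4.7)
The plan is to split the statement into three claims: first, that a real eigenvalue $\lambda$ admitting a positive eigenfunction must strictly exceed $\max_{x\in D_i}(m_i(x)+\hat l_i(x))$; second, that any such $\lambda$ must coincide with the principal spectrum point $\lambda_i(\nu_i,l_i)$; and third, the converse, that the spectral-gap condition $\lambda_i(\nu_i,l_i)>\max_{x\in D_i}(m_i(x)+\hat l_i(x))$ forces $\lambda_i(\nu_i,l_i)$ to be an eigenvalue with a positive eigenfunction.

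For the first claim I would argue directly in the equation. Given $L_i(\nu_i,l_i)\phi=\lambda\phi$ with a positive periodic $\phi$, divide by $\phi$. For $i=1$ this produces
$$-\frac{\phi_t(t,x)}{\phi(t,x)}+\nu_1\int_D\kappa(y-x)\frac{\phi(t,y)}{\phi(t,x)}\,dy-\nu_1+l_1(t,x)=\lambda.$$
Integrating over $t\in[0,T]$ kills the $\phi_t/\phi$ term by $T$-periodicity of $\ln\phi(\cdot,x)$, and the remaining nonlocal integral is strictly positive because $\kappa\not\equiv 0$ and $\phi>0$. Rearranging and dividing by $T$ yields $\lambda-(m_1(x)+\hat l_1(x))>0$ for every $x\in D_1$, hence $\lambda>\max_{x\in D_1}(m_1(x)+\hat l_1(x))$. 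The cases $i=2,3$ are identical after substituting the $m_i$ from \eqref{b-i-eq}.

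For the second claim I would pass to the period map. Let $\Phi_i(t,s):X_i\to X_i$ denote the evolution semigroup generated by $u_t=\nu_i[\text{dispersal}]u+l_i(t,x)u$. A $T$-periodic positive eigenfunction $\phi$ of $L_i(\nu_i,l_i)$ with eigenvalue $\lambda\in\mathbb{R}$ corresponds precisely to a positive eigenvector $\phi(0,\cdot)\in X_i^+$ of $\Phi_i(T,0)$ with eigenvalue $e^{\lambda T}$. Because $\Phi_i(T,0)$ is a bounded positive operator on the ordered Banach space $X_i$, and is irreducible through the spreading property of $\kappa$ together with the positive time of propagation $T$, a Krein--Rutman type theorem forces $e^{\lambda T}$ to equal the spectral radius $r(\Phi_i(T,0))$; the spectral mapping theorem for the evolution family then identifies $\lambda$ with $\lambda_i(\nu_i,l_i)$. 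The uniqueness of such $\lambda$ also follows from this identification.

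For the converse I would decompose $L_i(\nu_i,l_i)=M_i+K_i$, where $M_i u=-u_t+(m_i(x)+l_i(t,x))u$ and $K_i$ is the remaining nonlocal integral piece. The eigenvalue problem for $M_i$ on $\mathcal{X}_i$ reduces by integration in $t$ to the condition that the eigenvalue equal $m_i(x)+\hat l_i(x)$ for some $x\in D_i$, so $\sigma(M_i)=\{m_i(x)+\hat l_i(x):x\in D_i\}$. Standard kernel-compactness arguments show $K_i$ is compact on $\mathcal{X}_i$, so the essential spectrum of $L_i(\nu_i,l_i)$ lies in $\sigma(M_i)$. Under the hypothesis, $\lambda_i(\nu_i,l_i)$ then sits strictly above the essential spectrum, and so is an isolated eigenvalue of finite algebraic multiplicity; Krein--Rutman applied once more to $\Phi_i(T,0)$ (whose spectral radius corresponds to $\lambda_i(\nu_i,l_i)$ via the spectral mapping) furnishes the positive eigenfunction. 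The main obstacle I expect is step two, namely verifying the precise irreducibility and spectral-mapping hypotheses needed to apply Krein--Rutman to the period map in the absence of compactness of $L_i(\nu_i,l_i)$ itself; the compactness of $K_i$ on $\mathcal{X}_i$ and the identification $\sigma(M_i)=\{m_i+\hat l_i\}$ are technical but standard.
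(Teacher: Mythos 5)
The paper itself does not prove this proposition; it simply cites \cite[Theorem A]{RaSh}, so the comparison is really with the Rawal--Shen argument. Your first claim is correct and is essentially their computation: divide by the eigenfunction and average over a period, using $T$-periodicity of $\ln\phi(\cdot,x)$ and strict positivity of the nonlocal term (for $i=1$ one also checks $\int_D\kappa(y-x)\,dy>0$ for every $x\in\bar D$, which holds since $D$ is open and ${\rm supp}\,\kappa=B(0,r)$). One small caveat: in Definition \ref{principal-spectrum-def} a ``positive eigenfunction'' only means $\phi\in\mathcal{X}_i^+$, i.e.\ $\phi\ge 0$, so before dividing by $\phi$ you must upgrade to pointwise strict positivity, e.g.\ via $\phi(t,\cdot)=e^{-\lambda t}\Phi_i(t,0)\phi(0,\cdot)$ and the positivity-improving property of the evolution family on the connected domain (iterates of the kernel spread support).

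There are, however, two genuine gaps in your outline. First, $K_i$ is \emph{not} compact on $\mathcal{X}_i$: it smooths in $x$ but does nothing in $t$. Concretely, $u_n(t,x)=\sin(2\pi nt/T)\,w(x)$ gives $K_iu_n=\sin(2\pi nt/T)\,\nu_i (Kw)(x)$, a bounded sequence with no uniformly convergent subsequence, so your claim that the essential spectrum of $L_i$ lies in $\sigma(M_i)$ does not follow as stated. The repair is to use \emph{relative} compactness: $(\lambda-M_i)^{-1}$ solves a scalar ODE in $t$ and hence produces functions uniformly Lipschitz in $t$, after which $K_i$ supplies equicontinuity in $x$; Arzel\`a--Ascoli then shows $K_i(\lambda-M_i)^{-1}$ is compact, and analytic Fredholm theory in the half-plane $\{\operatorname{Re}\lambda>\max_{x\in D_i}(m_i(x)+\hat l_i(x))\}$ (which is connected and meets the resolvent set) gives discreteness there. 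Equivalently, and closer to \cite{RaSh}, one works at the level of the period map on $X_i$: Duhamel's formula writes $\Phi_i(T,0)$ as multiplication by $e^{T(m_i(x)+\hat l_i(x))}$ plus a compact operator (the Duhamel term is precompact in $C(\bar D)$ by equicontinuity in $x$ alone), whence $r_{ess}(\Phi_i(T,0))\le e^{T\max(m_i+\hat l_i)}$. Second, the classical Krein--Rutman theorem is inapplicable in both places you invoke it, because $\Phi_i(T,0)$ is not compact --- precisely the difficulty the paper emphasizes; for the converse you need Nussbaum's generalized Krein--Rutman theorem ($r>r_{ess}$ for a positive operator yields an eigenvector in the cone). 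For your second claim no Krein--Rutman is needed at all: since $\phi(0,\cdot)\ge\delta>0$ on the compact domain, any $u$ with $\|u\|_{X_i}\le 1$ satisfies $|u|\le\delta^{-1}\phi(0,\cdot)$, so $|\Phi_i(T,0)^n u|\le \delta^{-1}e^{n\lambda T}\phi(0,\cdot)$ and $r(\Phi_i(T,0))\le e^{\lambda T}$, while $\ge$ is automatic; combined with the identification $\lambda_i(\nu_i,l_i)=T^{-1}\log r(\Phi_i(T,0))$ (the spectral-mapping fact you flag, itself proved in \cite{RaSh} and not free), this yields $\lambda=\lambda_i(\nu_i,l_i)$. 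With these repairs your architecture matches the cited proof.
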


\begin{proof}
See \cite[Theorem A]{RaSh}.
\end{proof}

\begin{proposition}
\label{pev-prop2}
 Let $\nu_i>0$ and  $l_i\in {\mathcal X}_i$ ($1\leq i\leq 3$) be given.
 The  principal eigenvalue of $L_i(\nu_i,l_i)$ exists
if $m_i(x)+\hat l_i(x)$ is $C^N$, there is some $x_0\in {\rm Int}(D_i)$
in the case $i=1,2$ and $x_0\in D_i$ in the case $i=3$ satisfying
that $m_i(x_0)+\hat l_i(x_0)=\max_{x\in D_i}(m_i(x)+\hat l_i(x))$,
and  the partial derivatives of $m_i(x)+\hat l_i(x)$ up to order
$N-1$ at $x_0$ are zero.
\end{proposition}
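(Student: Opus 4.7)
The plan is to invoke Proposition \ref{pev-prop1}, which reduces the task to verifying the strict inequality
\[
\lambda_i(\nu_i,l_i) \;>\; \max_{x \in D_i}\bigl(m_i(x)+\hat l_i(x)\bigr).
\]
The idea is to construct a positive test function that certifies a spectrum point strictly above this threshold, and then conclude from the iff criterion of Proposition \ref{pev-prop1}.

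First I would reduce the time-periodic problem to its autonomous counterpart. A time-averaging argument, testing the periodic eigenvalue equation against a positive time-independent profile and integrating over one period, furnishes the comparison $\lambda_i(\nu_i,l_i) \ge \lambda_i^0(\nu_i,\hat l_i)$. Since $m_i(x)+\hat l_i(x)$ is the same quantity in both the periodic and averaged settings, this reduces the problem to the strict inequality
\[
\lambda_i^0(\nu_i,\hat l_i) \;>\; \max_{x \in D_i}\bigl(m_i(x)+\hat l_i(x)\bigr)
\]
for the autonomous nonlocal operator $L_i^0(\nu_i,\hat l_i)$.

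For this autonomous inequality I would construct a family of positive bump test functions $\phi_\epsilon\in X_i^+$ localized in a ball of radius $\epsilon$ around $x_0$ and probe the action of $L_i^0(\nu_i,\hat l_i)$. The Taylor expansion
\[
m_i(x)+\hat l_i(x) \;\ge\; \max_{x\in D_i}(m_i+\hat l_i)\;-\;C|x-x_0|^N
\]
(which uses the $C^N$ regularity together with the vanishing of partial derivatives up to order $N-1$ at $x_0$) shows that the multiplicative part of the operator degrades the Rayleigh quotient by at most $O(\epsilon^N)$, while the strictly positive nonlocal integral contribution can be made at least of order $\epsilon^N$, with a strictly positive coefficient determined by $\kappa$ near $0$ and by the interior position of $x_0$ (versus $x_0\in D_i$ for the periodic case $i=3$, where the interior condition is automatic). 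Calibrating $\epsilon$ then yields a Rayleigh lower bound for $\lambda_i^0(\nu_i,\hat l_i)$ that strictly exceeds $\max(m_i+\hat l_i)$.

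The main obstacle is precisely this balance between two contributions of the same order $\epsilon^N$: the positive integral gain must strictly dominate the negative multiplicative correction. The hypothesis that the first $N-1$ partial derivatives of $m_i+\hat l_i$ vanish at $x_0$ is exactly what is needed to ensure the correction is no worse than $\epsilon^N$ rather than of lower order; a generic interior maximum with non-vanishing intermediate derivatives would render the correction larger and defeat the test. Once strict dominance is secured, Proposition \ref{pev-prop1} upgrades $\lambda_i(\nu_i,l_i)$ to a principal eigenvalue with a positive eigenfunction.
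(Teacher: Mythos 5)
The paper offers no argument of its own here: Proposition \ref{pev-prop2} is quoted verbatim from \cite[Theorem B]{RaSh}, and your overall architecture is in fact the same as in that reference --- reduce via the iff criterion of Proposition \ref{pev-prop1} to the strict inequality, pass to the time-averaged autonomous operator via $\lambda_i(\nu_i,l_i)\ge \lambda_i^0(\nu_i,\hat l_i)$, then exploit the order-$(N-1)$ flatness at $x_0$. (Be aware that the averaging inequality is itself a nontrivial proposition in \cite{RaSh}, proved by comparison of evolution families after a change of variables, not by the one-line pairing you sketch; but the statement you invoke is correct.) The genuine gap is in your final step. With an $L^2$-normalized bump of width $\epsilon$, the nonlocal gain is of size $\nu_i\kappa(0)c_N\,\epsilon^N$ while the multiplicative loss is bounded only by $C\epsilon^N$, where $C$ controls the $N$-th order Taylor remainder of $\zeta:=m_i+\hat l_i$ at $x_0$. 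Both terms are of the \emph{same} order in $\epsilon$, so no calibration of $\epsilon$ yields strict dominance: you would need $\nu_i\kappa(0)c_N>C$, an assumption absent from the hypotheses. Since the proposition is asserted for every $\nu_i>0$ and arbitrary $N$-th derivatives, your Rayleigh bound cannot in general exceed $\max_{D_i}\zeta$, and the argument fails precisely at the point you flag as the main obstacle.

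The mechanism that actually closes this, as in \cite{RaSh} (and \cite{Cov} in the autonomous case), is the \emph{borderline non-integrability} produced by the flatness hypothesis: from $\zeta_{\max}-\zeta(y)\le C|y-x_0|^N$ with $x_0$ interior (automatic for $i=3$ by periodicity) one gets
\[
\int_{D_i}\frac{dy}{\lambda-\zeta(y)}\ \ge\ \int_{B_\delta(x_0)}\frac{dy}{(\lambda-\zeta_{\max})+C|y-x_0|^N}\ \longrightarrow\ \infty
\qquad\text{as }\lambda\downarrow \zeta_{\max},
\]
a logarithmic divergence (here $|y-x_0|^{-N}$ is exactly the non-integrable power in $\RR^N$, which is why derivatives must vanish up to order $N-1$ and why a fixed-shape bump, which cannot see a log, is the wrong test object). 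One then studies the positive operator $(U_\lambda u)(x)=\nu_i\big(\int\kappa(y-x)u(y)\,dy\big)/(\lambda-\zeta(x))$ for $\lambda>\zeta_{\max}$: its spectral radius $r(U_\lambda)$ is continuous and decreasing in $\lambda$ and tends to $0$ as $\lambda\to\infty$, while the divergence above forces $r(U_\lambda)>1$ for $\lambda$ near $\zeta_{\max}$ --- equivalently, testing with the resolvent-weighted family $u_\lambda=(\lambda-\zeta)^{-1}$, whose nonlocal gain is quadratic in the diverging local mass while the loss is only linear in it. Hence there exists $\lambda^*>\zeta_{\max}$ with $r(U_{\lambda^*})=1$, i.e.\ a principal eigenvalue of $L_i^0(\nu_i,\hat l_i)$, and then $\lambda_i(\nu_i,l_i)\ge\lambda_i^0(\nu_i,\hat l_i)=\lambda^*>\max_{x\in D_i}(m_i(x)+\hat l_i(x))$, so Proposition \ref{pev-prop1} applies. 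Your diagnosis of \emph{where} the flatness enters is exactly right; replacing the $\epsilon$-calibration by this divergence argument is what is needed to make the proof sound.
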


\begin{proof}
See \cite[Theorem B]{RaSh}.
\end{proof}

\begin{proposition}
\label{pev-prop3}
\begin{itemize}
\item[(1)] For given $1\le i\le 3$, $\nu_i>0$,  and $l_i,\ \tilde l_i\in\mathcal{X}_i$ with $l_i(t,x)\le \tilde l_i(t,x)$,
$$
\lambda_i(\nu_i,l_i)\le \lambda_i(\nu_i,\tilde l_i).
$$

\item[(2)] For given $1\le i\le 3$, $\nu_i>0$, $l_i\in\mathcal{X}_i$, and any constant $a\in\RR$,
$$
\lambda_i(\nu_i,l_i+a)=\lambda_i(\nu_i,l_i)+a.
$$
\end{itemize}
\end{proposition}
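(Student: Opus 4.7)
Part (2) is essentially immediate from the definition. For every $u$ in $\mathcal{D}(L_i(\nu_i, l_i)) = \mathcal{D}(L_i(\nu_i, l_i + a))$,
$$L_i(\nu_i, l_i + a) u \;=\; L_i(\nu_i, l_i) u + a u,$$
so $L_i(\nu_i, l_i + a) = L_i(\nu_i, l_i) + a I$ as operators; hence $\sigma(L_i(\nu_i, l_i + a)) = \sigma(L_i(\nu_i, l_i)) + a$, and the claim follows by taking the supremum of real parts.

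For part (1), the plan is to exploit monotonicity of the time-$T$ Poincar\'e map. For $l_i\in\mathcal{X}_i$, let $\Phi_{l_i}(t,s):X_i\to X_i$ denote the evolution operator of the linear nonlocal dispersal problem with coefficient $l_i$, and set $P_i(l_i) := \Phi_{l_i}(T,0)$. The principal spectrum framework recalled in Proposition \ref{pev-prop1} (cf.\ \cite{RaSh}) identifies
$$\lambda_i(\nu_i, l_i) \;=\; \frac{1}{T}\log r\bigl(P_i(l_i)\bigr),$$
with $r(\cdot)$ the spectral radius on $X_i$. A Duhamel/Gronwall argument using $\kappa\ge 0$ gives the comparison estimate $0\le P_i(l_i)u_0\le P_i(\tilde l_i)u_0$ for every $u_0\in X_i^+$ whenever $l_i\le\tilde l_i$. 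Iterating yields $0\le P_i(l_i)^n u_0\le P_i(\tilde l_i)^n u_0$ for all $n\ge 1$; since the sup-norm is monotone on $X_i^+$ and every $u\in X_i$ splits as $u_+-u_-$ with $u_\pm\in X_i^+$ and $\|u_\pm\|_{X_i}\le\|u\|_{X_i}$, this implies $\|P_i(l_i)^n\|_{X_i\to X_i}\le 2\|P_i(\tilde l_i)^n\|_{X_i\to X_i}$. Gelfand's formula $r(P)=\lim_n\|P^n\|^{1/n}$ then gives $r(P_i(l_i))\le r(P_i(\tilde l_i))$, and taking logs and dividing by $T$ concludes.

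The principal technical input is the identification $\lambda_i(\nu_i,l_i) = T^{-1}\log r(P_i(l_i))$, which rests on the Krein-Rutman-type machinery of \cite{RaSh} linking the principal spectrum point of the time-periodic nonlocal operator to the spectral radius of its period map. A self-contained alternative is a direct super-solution argument: for any $\lambda>\lambda_i(\nu_i,\tilde l_i)$, pick a positive $\phi\in\mathcal{X}_i$ with $(L_i(\nu_i,\tilde l_i)-\lambda)\phi\le 0$; the hypothesis $l_i\le\tilde l_i$ then gives $(L_i(\nu_i,l_i)-\lambda)\phi\le 0$, from which $\lambda\ge\lambda_i(\nu_i,l_i)$ follows by the same spectral-radius estimate. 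Either route is routine, and the only mildly delicate step is confirming that the positive cone $X_i^+$ is total so that comparison estimates on $X_i^+$ control the full operator norm.
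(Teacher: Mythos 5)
Your proof is correct in substance, but it is worth noting that the paper does not actually argue part (1) at all: it simply defers to \cite[Propositions 3.2 and 3.10]{RaSh} (and disposes of part (2) by the same one-line observation you make, namely $L_i(\nu_i,l_i+a)=L_i(\nu_i,l_i)+aI$, so the spectrum translates). What you have done for (1) is reconstruct the standard argument that underlies that citation: the identification $\lambda_i(\nu_i,l_i)=T^{-1}\log r(P_i(l_i))$ with the period map, the comparison estimate $0\le P_i(l_i)u_0\le P_i(\tilde l_i)u_0$ on the cone (which follows cleanly from $\kappa\ge 0$ and positivity of the semiflow, since $w=\tilde u-u$ satisfies $w_t\ge \nu[K-I]w+\tilde l_i w$ once $u\ge 0$), the splitting $u=u_+-u_-$ with $\lVert u_\pm\rVert\le\lVert u\rVert$ to pass from cone estimates to the operator norm, and Gelfand's formula. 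This chain is sound; the decomposition into positive and negative parts is available in $C(\bar D)$, and the factor $2$ is harmless after taking $n$-th roots. So your route is not genuinely different from the paper's---it is the paper's route with the black box opened, which is arguably more informative.

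Two precision points. First, the identification $\lambda_i(\nu_i,l_i)=T^{-1}\log r(P_i(l_i))$ is \emph{not} Krein--Rutman machinery; it is the Floquet-type spectral correspondence (spectral mapping for the evolution semigroup on the periodic space $\mathcal{X}_i$), and it is exactly the content of the propositions in \cite{RaSh} that the paper cites. Krein--Rutman in fact \emph{fails} to apply here for lack of compactness of $P_i(l_i)$---this is precisely why the principal eigenvalue may not exist (Propositions \ref{pev-prop1} and \ref{pev-prop2}), so invoking it is a mislabel even if nothing in your main argument depends on it. Second, your ``self-contained alternative'' via a positive super-solution $\phi$ with $(L_i(\nu_i,\tilde l_i)-\lambda)\phi\le 0$ has a genuine gap as stated: when $\lambda_i(\nu_i,\tilde l_i)$ is not a principal eigenvalue, such a positive $\phi\in\mathcal{X}_i$ is not obviously available for $\lambda$ close to $\lambda_i(\nu_i,\tilde l_i)$. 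The standard repair is to perturb $\tilde l_i$ so that the hypotheses of Proposition \ref{pev-prop2} hold (ensuring a principal eigenfunction for the perturbed operator) and then pass to the limit using continuity of $\lambda_i$ in $l_i$; without that step the alternative sketch should not be presented as routine. Your primary Gelfand-formula argument does not suffer from this issue and stands on its own.
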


\begin{proof}
(1) It follows from \cite[Propositions 3.2 and 3.10]{RaSh}.

(2) It follows directly from the definition.
\end{proof}

\begin{proposition}
\label{pev-prop4}
For given $1\le i\le 3$, $\nu_i>0$, and $l_i\in \mathcal{X}_i$, if there is
$\phi_i\in \mathcal{X}_i^+\setminus\{0\}$ such that
$$
L_i(\nu_i,l_i)\phi_i=0,
$$
then $\lambda_i(\nu_i,l_i)=0$.
\end{proposition}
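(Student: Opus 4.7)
The plan is to invoke Proposition \ref{pev-prop1} directly; the statement is essentially a special case of that characterization packaged as a stand-alone lemma. First, I would unpack the terminology: according to Definition \ref{principal-spectrum-def}, ``positive eigenfunction'' in this paper just means a nontrivial element of $\mathcal{X}_i^+$. Thus the hypothesis $\phi_i \in \mathcal{X}_i^+\setminus\{0\}$ with $L_i(\nu_i,l_i)\phi_i = 0$ says exactly that $\lambda = 0$ is a real eigenvalue of $L_i(\nu_i,l_i)$ admitting a positive eigenfunction in that sense.

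Next, I would apply the first assertion of Proposition \ref{pev-prop1}, which says that any real eigenvalue with a positive eigenfunction must coincide with the principal spectrum point $\lambda_i(\nu_i,l_i)$. Specializing to $\lambda = 0$ gives $\lambda_i(\nu_i,l_i) = 0$, which is the desired conclusion. The ``moreover'' clause of Proposition \ref{pev-prop1} would as a byproduct yield the strict inequality $0 > \max_{x\in D_i}(m_i(x) + \hat l_i(x))$, but this side information is not needed for the statement.

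The main obstacle, to the extent that there is one, is purely notational: one must confirm that the hypothesis of the proposition is a verbatim instance of the premise of Proposition \ref{pev-prop1} with $\lambda = 0$, and in particular that the paper's convention about nonnegativity vs.\ strict positivity aligns. Since no new spectral-theoretic work is required --- all the heavy lifting was performed in establishing Proposition \ref{pev-prop1} --- I expect the full proof to be essentially one line, citing Proposition \ref{pev-prop1}.
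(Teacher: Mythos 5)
Your proof is correct, and it takes a route genuinely different from the paper's. The paper's entire proof of Proposition \ref{pev-prop4} is a bare citation of Propositions 3.2 and 3.10 of \cite{RaSh} --- the same pair of external results it invokes for the monotonicity statement in Proposition \ref{pev-prop3}(1) --- so it never passes through Proposition \ref{pev-prop1} at all. Your reduction instead observes that the hypothesis $\phi_i\in\mathcal{X}_i^+\setminus\{0\}$, $L_i(\nu_i,l_i)\phi_i=0$ says exactly that $\lambda=0$ is a real eigenvalue with a positive eigenfunction in the paper's sense (Definition \ref{principal-spectrum-def} indeed glosses ``positive eigenfunction'' as $\phi\in\mathcal{X}_i^+$), whence the first assertion of Proposition \ref{pev-prop1} gives $\lambda_i(\nu_i,l_i)=0$. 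This buys two things: the proof becomes self-contained within the results already stated in Section 2 rather than deferring to unstated propositions in \cite{RaSh}, and you get as a byproduct the stronger conclusion that $0$ is actually the principal eigenvalue of $L_i(\nu_i,l_i)$, with $0>\max_{x\in D_i}\bigl(m_i(x)+\hat l_i(x)\bigr)$, not merely the principal spectrum point. The only caveat --- which you correctly flagged --- is the nonnegative-versus-strictly-positive convention; beyond the definitional alignment you note, any mismatch is harmless here, since $\kappa>0$ on $B(0,r)$ forces a nonnegative nontrivial continuous eigenfunction of these operators to be strictly positive (on connected $D$), so the hypothesis of Proposition \ref{pev-prop1} is met in any reasonable reading.
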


\begin{proof}
It follows from \cite[Propositions 3.2 and 3.10]{RaSh}.
\end{proof}

We remark that
$$
\lambda_0^D=\lambda_1(1,0)<0,\quad \lambda_0^N=\lambda_2(1,0)=0,\quad \lambda_0^P=\lambda_3(1,0)=0,
$$
and
$$
\lambda_1^0(\nu_1,0)=\nu_1\lambda_0^D,\quad \lambda_2^0(\nu_2,0)=\nu_2\lambda_0^N,\quad \lambda_3^0(\nu_3,0)=\nu_3\lambda_0^P.
$$

\subsection{Semitrivial time periodic solutions}

In this section, we recall the existence and stability of semitrivial time periodic solutions of  \eqref{vol-lot-non-disper-DC}, \eqref{vol-lot-non-disper-NC}, and \eqref{vol-lot-non-disper-PC}.

First of all, let $X_i$, $X_i^+$, $X_i^{++}$ ($1\leq i\leq 3$) be as in subsection 2.1. Semigroup theory (see \cite{Hen}, \cite{Paz})
guarantees for $(u_0,v_0)\in X_1\times X_1$ (resp. $(u_0,v_0)\in X_2\times X_2$, $(u_0,v_0)\in X_3\times X_3$) that \eqref{vol-lot-non-disper-DC}
 (resp. \eqref{vol-lot-non-disper-NC}, \eqref{vol-lot-non-disper-PC}) has a unique (local) solution
$(u(t,\cdot;u_0,v_0)$, $v(t,\cdot;u_0,v_0))$ with $(u(0,\cdot;u_0,v_0),$ $v(0,\cdot;u_0,v_0))=(u_0,v_0)$. Moreover,
 if $(u_0,v_0)\in X_i\times\{0\}$ ($\{0\}\times X_i$), then
$(u(t,\cdot;u_0,v_0),$ $v(t,\cdot;u_0,v_0))\in X_i\times\{0\}$ ($\{0\}\times X_i$).

\begin{proposition}
\label{semi-trivial-prop} If $a_{iL}>-\nu_i \lambda_0^D$ for $i=1, 2$ (resp. $a_{iL}>-\nu_i \lambda_0^N$ for $i=1, 2$,  $a_{iL}>-\nu_i \lambda_0^P$ for
 $i=1, 2$),
then  \eqref{vol-lot-non-disper-DC} (resp. \eqref{vol-lot-non-disper-NC}, \eqref{vol-lot-non-disper-PC}) has two semitrivial time periodic
solutions $(u^*(t,x),0)$ and $(0,v^*(t,x))$ with $u^*(t,\cdot),\ v^*(t,\cdot)\in X_1^{++}$ (resp. $u^*(t,\cdot),\ v^*(t,\cdot)\in X_2^{++}$,
$u^*(t,\cdot),\ v^*(t,\cdot)\in X_3^{++}$). Moreover, for any $(u_0,v_0)\in (X_1^+\setminus\{0\})\times\{0\}$ (resp. $(u_0,v_0)\in (X_2^+\setminus\{0\})\times\{0\}$, $(u_0,v_0)\in (X_3^+\setminus\{0\})\times\{0\}$,
$$
(u(t,\cdot;u_0,v_0),v(t,\cdot;u_0,v_0))-(u^*(t,\cdot),0)\to (0,0)\quad {\rm as}\quad t\to\infty,
$$
and for any
$(u_0,v_0)\in \{0\}\times (X_1^+\setminus\{0\})$ (resp. $(u_0,v_0)\in \{0\}\times (X_2^+\setminus\{0\})$, $(u_0,v_0)\in
\{0\}\times (X_3^+\setminus\{0\})$,
$$
(u(t,\cdot;u_0,v_0),v(t,\cdot;u_0,v_0))-(0,v^*(t,\cdot))\to (0, 0)\quad {\rm as}\quad t\to\infty,
$$
where $(u(t,\cdot;u_0,v_0),v(t,\cdot;u_0,v_0))$ is the solution of \eqref{vol-lot-non-disper-DC} (resp. \eqref{vol-lot-non-disper-NC}, \eqref{vol-lot-non-disper-PC}) with initial $(u_0,v_0)$.
\end{proposition}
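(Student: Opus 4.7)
The plan is to reduce to the scalar nonlocal logistic equation obtained by setting one component to zero, construct sub- and super-solutions using the principal spectral theory of Section 2.1, and then deduce convergence via the comparison principle together with a monotone iteration of the time-$T$ map. I will describe the argument for $(u^*(t,\cdot),0)$ in the Dirichlet case \eqref{vol-lot-non-disper-DC}; the other two boundary settings and the statement for $(0,v^*(t,\cdot))$ are identical \emph{mutatis mutandis}.

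First I would observe that $v_0\equiv 0$ is invariant, so $u(t,\cdot;u_0,0)$ satisfies the scalar nonlocal logistic equation
\begin{equation*}
u_t=\nu_1\Big[\int_D\kappa(y-x)u(t,y)\,dy-u(t,x)\Big]+u\bigl(a_1(t,x)-b_1(t,x)u\bigr).
\end{equation*}
The hypothesis $a_{1L}>-\nu_1\lambda_0^D$ together with Proposition \ref{pev-prop3}(2) gives $\lambda_1(\nu_1,a_{1L})=a_{1L}+\nu_1\lambda_0^D>0$, and by Proposition \ref{pev-prop3}(1), $\lambda_1(\nu_1,a_1)>0$ as well. Because $a_1$ is smooth and time-periodic, Proposition \ref{pev-prop1} (or Proposition \ref{pev-prop2}) supplies a positive $T$-periodic principal eigenfunction $\phi\in\mathcal{X}_1^{++}$ with $L_1(\nu_1,a_1)\phi=\lambda_1(\nu_1,a_1)\phi$. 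Then for $\varepsilon>0$ small the function $\varepsilon\phi$ is a strict time-periodic sub-solution, since the left-hand side of the logistic equation applied to $\varepsilon\phi$ equals $\varepsilon\phi\bigl(\lambda_1(\nu_1,a_1)-b_1\varepsilon\phi\bigr)>0$. Any constant $M\ge a_{1M}/b_{1L}$ is a super-solution, because $\int_D\kappa(y-x)\,dy\le 1$ makes the nonlocal term nonpositive on constants in the Dirichlet setting (it vanishes in the Neumann and periodic settings).

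Next I would invoke the comparison principle for the scalar nonlocal logistic equation (the single-species case of the comparison results described in Section 2.3 of the paper) to conclude that the time-$T$ map is order-preserving and that the iterates $U_n(x):=u(nT,x;\varepsilon\phi(0,\cdot),0)$ form a monotone nondecreasing sequence bounded above by $M$, while the iterates starting from the constant $M$ are nonincreasing and bounded below by $\varepsilon\phi(0,\cdot)$. Both sequences converge pointwise; using uniform boundedness, the Lipschitz dependence of the scalar equation on its data, and an Ascoli/equicontinuity argument in $X_1$, one passes to the limit to obtain a minimal and a maximal $T$-periodic solution $u_*^\flat,u^*_\sharp\in X_1^{++}$. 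Uniqueness $u_*^\flat=u^*_\sharp=:u^*$ follows by a standard sliding argument: if two positive $T$-periodic solutions $u_1,u_2$ existed, set $r=\sup\{s>0:su_1\le u_2\}$; then $ru_1\le u_2$ is a sub-solution of the $u_2$ equation only at equality because $b_1>0$, which forces $r\ge 1$, and by symmetry $r=1$.

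For the global semi-stability statement, given $u_0\in X_1^+\setminus\{0\}$ I would first show that $u(t_0,\cdot;u_0,0)\in X_1^{++}$ for some $t_0>0$. Since there is no classical strong maximum principle, the argument proceeds by iterating the Duhamel representation: each convolution with $\kappa$ enlarges the support by an $r$-neighborhood, and after finitely many steps the positivity spreads throughout the connected set $\bar D$, while the reaction term $u(a_1-b_1u)$ keeps $u$ strictly positive once it is positive. Once $u(t_0,\cdot;u_0,0)\in X_1^{++}$ one can find $\varepsilon>0$ and $M$ with $\varepsilon\phi(t_0,\cdot)\le u(t_0,\cdot;u_0,0)\le M$, so by monotonicity of the time-$T$ map both the sub- and super-iterates starting from these data converge to $u^*$, sandwiching $u(t,\cdot;u_0,0)$ and forcing $u(t,\cdot;u_0,0)\to u^*(t,\cdot)$ as $t\to\infty$. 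Combined with $v(t,\cdot;u_0,0)\equiv0$, this gives $(u(t,\cdot;u_0,0),v(t,\cdot;u_0,0))-(u^*(t,\cdot),0)\to(0,0)$.

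The main obstacle is the absence of smoothing and strong compactness for the nonlocal dispersal semigroup, which rules out standard parabolic regularity. This manifests in two places: first, in verifying that the monotone iterates converge in the $X_1$-topology and that the limit lies in $X_1^{++}$ rather than merely in $X_1^+$; second, in establishing the positivity-spreading step that every nontrivial nonnegative solution eventually enters $X_1^{++}$. Both are handled by carefully combining the support structure of $\kappa$ with the positivity of the reaction term and the time-periodic principal eigenfunction $\phi$, rather than by invoking parabolic strong maximum principles.
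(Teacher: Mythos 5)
Your overall strategy (reduce to the scalar nonlocal logistic equation, sandwich between an eigenfunction sub-solution and a constant super-solution, iterate the monotone time-$T$ map) is sound in outline, but it contains a genuine gap at its foundation: the principal eigenfunction $\phi$ you build everything on need not exist. Under the sole hypothesis $a_{1L}>-\nu_1\lambda_0^D$ you do get $\lambda_1(\nu_1,a_1)\ge \nu_1\lambda_0^D+a_{1L}>0$, but for nonlocal dispersal operators the principal spectrum point is in general \emph{not} an eigenvalue — this is precisely the pathology the paper's Section 2.1 is about. Proposition \ref{pev-prop1} yields a principal eigenfunction only under the strict inequality $\lambda_1(\nu_1,a_1)>\max_{x\in D_1}\bigl(-\nu_1+\hat a_1(x)\bigr)$, where $\hat a_1(x)=\frac{1}{T}\int_0^T a_1(t,x)\,dt$, and Proposition \ref{pev-prop2} requires flatness of $-\nu_1+\hat a_1$ near its maximizer; neither condition follows from the proposition's hypotheses, and when $\hat a_1$ has a sharp peak the eigenfunction genuinely fails to exist in $\mathcal{X}_1$. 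So the sentence ``Proposition \ref{pev-prop1} (or Proposition \ref{pev-prop2}) supplies a positive $T$-periodic principal eigenfunction'' is false as stated, and your sub-solution $\varepsilon\phi$ may be unavailable. The standard repair is an approximation: choose $a_1^\epsilon\le a_1$, close to $a_1$, for which the flatness criterion of Proposition \ref{pev-prop2} holds and $\lambda_1(\nu_1,a_1^\epsilon)>0$, and run your sub-solution argument with the eigenfunction of $L_1(\nu_1,a_1^\epsilon)$. This is exactly what is done inside Theorem E of \cite{RaSh} — and that is the whole content of the paper's proof, which consists of the one-line spectral estimate $\lambda_1(\nu_1,a_1)\ge\lambda_1(\nu_1,a_{1L})=\nu_1\lambda_0^D+a_{1L}>0$ followed by a citation of that theorem; you are in effect reconstructing the cited proof, and the reconstruction must confront the eigenfunction-existence issue head on.

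A secondary weak point is your convergence step: appealing to ``Lipschitz dependence \ldots and an Ascoli/equicontinuity argument in $X_1$'' does not work, because the nonlocal semigroup has no smoothing effect — the modulus of continuity of the iterates $U_n$ can degrade with $n$, equicontinuity is not available, and Dini's theorem cannot be used until you already know the pointwise limit is continuous. The mechanism the paper itself uses for exactly this difficulty (in Section 3, for the two-species analogue) is: take the monotone pointwise limit as a merely measurable function, pass to the limit in the integrated (Duhamel) form by dominated convergence to see the limit solves the equation, and then prove continuity in $x$ \emph{separately} by freezing $x$ and identifying $t\mapsto u^{**}(t,x)$ with the unique stable $T$-periodic solution of the scalar ODE with forcing $d(t,x)=\nu_1\int_D\kappa(y-x)u^{**}(t,y)\,dy$, which depends continuously on the parameter $x$ (the scalar counterpart of Theorem \ref{main-thm} and Lemma \ref{main-lm1}). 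With those two repairs — eigenfunctions for approximating coefficients, and the measurable-limit-plus-frozen-$x$ continuity argument in place of Ascoli — the remaining ingredients of your sketch (invariance of $\{v=0\}$, constant super-solutions, the sliding uniqueness argument, and support-spreading positivity via iterated convolution with $\kappa$ on the connected domain) are all correct.
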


\begin{proof}
We give a proof for \eqref{vol-lot-non-disper-DC}. It can be proved similarly for \eqref{vol-lot-non-disper-NC} and \eqref{vol-lot-non-disper-PC}.

First, we note that
$$
\lambda_1(\nu_1, a_1)\ge \lambda_1(\nu_1,a_{1L})=\nu_1 \lambda_1(1,0)+a_{1L}=\nu_1\lambda_0^D +a_{1L}.
$$
Hence $\lambda_1(\nu_1,a_1)>0$. Then by \cite[Theorem E]{RaSh}, \eqref{vol-lot-non-disper-DC} has a semitrivial periodic solution
$(u^*(t,x),0)$ satisfying that $u^*(t,\cdot)\in X_1^{++}$ and for any $(u_0,0)\in (X_1^+\setminus\{0\})\times \{0\}$,
$$
(u(t,\cdot;u_0,v_0),v(t,\cdot;u_0,v_0))-(u^*(t,\cdot),0)\to (0,0)
$$
as $t\to\infty$, where $(u(t,\cdot;u_0,v_0),v(t,\cdot;u_0,v_0))$ is the solution of \eqref{vol-lot-non-disper-DC} with initial
$(u_0,v_0)$.

Similarly, \eqref{vol-lot-non-disper-DC} has a semitrivial periodic solution
$(0,v^*(t,x))$ satisfying that $v^*(t,\cdot)\in X_1^{++}$ and for any $(u_0,0)\in \{0\}\times (X_1^+\setminus\{0\})$,
$$
(u(t,\cdot;u_0,v_0),v(t,\cdot;u_0,v_0))-(0,v^*(t,\cdot))\to (0,0)
$$
as $t\to\infty$, where $(u(t,\cdot;u_0,v_0),v(t,\cdot;u_0,v_0))$ is the solution of \eqref{vol-lot-non-disper-DC} with initial
$(u_0,v_0)$.
\end{proof}

\subsection{Comparison principle}

In this subsection, we recall a comparison for solutions of  \eqref{vol-lot-non-disper-DC},
 \eqref{vol-lot-non-disper-NC}, and \eqref{vol-lot-non-disper-PC}.

For $u_1,u_2\in X_i$ ($1\leq i\leq 3)$, we define
$$
u_1\leq u_2\,\,\,(u_1\geq u_2)\,\,\,{\rm if}\,\,\, u_2-u_1\in X_i^+\,\,\,(u_1-u_2\in X_i^+),
$$
and
$$
u_1\ll u_2\,\,\,(u_1\gg u_2)\,\,\,{\rm if}\,\,\, u_2-u_1\in X_i^{++}\,\,\,(u_1-u_2\in X_i^{++}).
$$

Define the following orderings in $X_i\times X_i$: \begin{equation} \label{order1} (u_1,v_1)\leq_1
(\ll_1) (u_2,v_2)\quad {\rm if}\quad u_1\leq(\ll) u_2,\, v_1\leq(\ll) v_2, \end{equation}
\begin{equation}
\label{order2} (u_1,v_1)\leq_2(\ll_2)(u_2,v_2)\quad {\rm if}\quad u_1\leq(\ll) u_2,\,v_1\geq(\gg) v_2.
\end{equation}
Observe that $\leq_1$ is the usual order and  $\leq_2$ is  called the {\it competitive order} in the literature.

Let  $\tau >0$  and $(u,v)\in C([0,\tau)\times\bar{D},\ensuremath{\mathbb{R}}^2)$ with $(u(t,\cdot),v(t,\cdot))\in X_1^+\times X_1^+$. Then
$(u,v)$ is called a {\it super-solution} ({\it sub-solution}) of \eqref{vol-lot-non-disper-DC} on $[0,\tau)$ if
$$
\begin{cases}
u_t\geq (\leq)\nu_1 [\int_D k(y-x)u(t,y)dy-u(t,x)]+u[a_1(t,x)-b_1(t,x)u-c_1(t,x)v],\quad x\in\bar D,\cr
v_t\leq (\geq
)\nu_2[\int_D k(y-x)v(t,y)dy-v(t,x)]+v[a_2(t,x)-b_2(t,x)u-c_2(t,x)v],\quad x\in \bar D,
\end{cases}
$$
for $t\in (0,\tau)$. Super-solutions and sub-solutions of \eqref{vol-lot-non-disper-NC} and \eqref{vol-lot-non-disper-PC}
are defined similarly.

\begin{proposition}
\label{comparison-prop}
\begin{itemize}
\item[\rm(1)] Consider  \eqref{vol-lot-non-disper-DC} (resp. \eqref{vol-lot-non-disper-NC}, \eqref{vol-lot-non-disper-PC}).
 For given $(u_0,v_0)\in X_1\times X_1$ (resp. $(u_0,v_0)\in X_2\times X_2$, $(u_0,v_0)\in X_3\times X_3$), if $(0,0)\leq_1(u_0,v_0)$, then $(0,0)\leq_1
(u(t,\cdot;u_0,v_0), v(t,\cdot;u_0,v_0))$ for all $t>0$ at which $(u(t,\cdot;u_0,v_0),v(t,\cdot;u_0,v_0))$
exists, where $(u(t,\cdot;u_0,v_0), v(t,\cdot;u_0,v_0))$ is the solution of  \eqref{vol-lot-non-disper-DC} (resp. \eqref{vol-lot-non-disper-NC}, \eqref{vol-lot-non-disper-PC}) with initial $(u_0,v_0)$.

\item[\rm(2)]  If $(0,0)\leq_1 (u_i(t,\cdot),v_i(t,\cdot))$ for $i=1,2$,
$(u_1(0,\cdot),v_1(0,\cdot))\leq_2 (u_2(0,\cdot)$, $v_2(0,\cdot))$, and
$(u_1(t,x),v_1(t,x))$ is a sub-solution and $(u_2(t,x),v_2(t,x))$ is a super-solution of \eqref{vol-lot-non-disper-DC}
(resp.  \eqref{vol-lot-non-disper-NC}, \eqref{vol-lot-non-disper-PC}) on
$[0,\tau)$, then $(u_1(t,\cdot),v_1(t,\cdot))\leq _2 (u_2(t,\cdot)$, $v_2(t,\cdot))$ for $t\in (0,\tau)$.

\item[\rm(3)]   Consider  \eqref{vol-lot-non-disper-DC} (resp. \eqref{vol-lot-non-disper-NC}, \eqref{vol-lot-non-disper-PC}).
 For given $(u_i,v_i)\in X_1\times X_1$ (resp. $(u_i,v_i)\in X_2\times X_2$, $(u_i,v_i)\in X_3\times X_3$) $(i=1,2)$, if $(0,0)\leq_1 (u_i,v_i)$ for $i=1,2$ and $(u_1,v_1)\leq_2 (u_2,v_2)$, then
$$(u(t,\cdot;u_1,v_1), v(t,\cdot;u_1,v_1))\leq _2
(u(t,\cdot;u_2,v_2),v(t,\cdot;u_2,v_2))$$ for all $t>0$ at which both $(u(t,\cdot;u_1,v_1)$,
$v(t,\cdot;u_1,v_1))$ and $(u(t,\cdot;u_2,v_2)$, $v(t,\cdot;u_2,v_2))$ exist, where $(u(t,\cdot;u_i,v_i), v(t,\cdot;u_i,v_i))$ is the solution of  \eqref{vol-lot-non-disper-DC} (resp. \eqref{vol-lot-non-disper-NC}, \eqref{vol-lot-non-disper-PC}) with initial $(u_i,v_i)$.

\item[\rm(4)] Let $(u_0,v_0)\in X^+_i\times X^+_i$ ($i=1,2,3$), then $(u(t,\cdot;u_0,v_0),v(t,\cdot;u_0,v_0))$
exists for all $t>0$.
\end{itemize}
\end{proposition}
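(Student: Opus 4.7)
\textbf{Proof plan for Proposition \ref{comparison-prop}.}

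My overall strategy is to reduce everything to a positivity preservation principle for a suitable \emph{linear} nonlocal integro-differential operator, and then peel off parts (1)--(4) in that order. For the linear building block, I fix $M>0$ large enough so that on the interval $[0,\tau]$ one has $a_i(t,x)-b_i(t,x)u-c_i(t,x)v \geq -M$ for all relevant $(u,v)$. For part (1), I rewrite the $u$-equation as
\begin{equation*}
u_t + (\nu_1+M)u = \nu_1\int_D \kappa(y-x)u(t,y)\,dy + \bigl(a_1(t,x)-b_1(t,x)u-c_1(t,x)v+M\bigr)u,
\end{equation*}
apply Duhamel, and iterate: if $u_0\ge 0$ and on a short interval $u(t,\cdot)\ge 0$, then the right-hand side integrand is $\ge 0$, forcing $u(t,\cdot)\ge 0$ to persist. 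The integral operator $u\mapsto \int_D \kappa(y-x)u(y)dy$ preserves positivity because $\kappa\geq0$, and the pointwise multiplier preserves positivity because we shifted by $M$. The same argument runs verbatim for $v$, and for the Neumann (eq.~\eqref{vol-lot-non-disper-NC}) and periodic (eq.~\eqref{vol-lot-non-disper-PC}) operators, since the only ingredients used are non-negativity of $\kappa$ and the zeroth-order multiplier shift.

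For part (2), I will exploit the fact that the system is quasi-monotone with respect to the competitive order $\leq_2$. Set $w(t,x)=u_2(t,x)-u_1(t,x)$ and $z(t,x)=v_1(t,x)-v_2(t,x)$; both are non-negative at $t=0$. Subtracting the sub-solution inequality from the super-solution inequality and using $b_1,c_1,b_2,c_2\ge 0$, I get an inequality of the form
\begin{equation*}
\begin{cases}
w_t \ge \nu_1\!\int_D \kappa(y-x)w(t,y)dy-\nu_1 w + \alpha(t,x)w+\beta(t,x)z,\\[2pt]
z_t \ge \nu_2\!\int_D \kappa(y-x)z(t,y)dy-\nu_2 z + \gamma(t,x)w+\delta(t,x)z,
\end{cases}
\end{equation*}
where $\alpha,\beta,\gamma,\delta$ are bounded on $[0,\tau]\times\bar D$ (with $\beta,\gamma\ge 0$ coming from the $-u c_1 v$ and $-v b_2 u$ cross terms after factoring). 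Shifting by a large constant $M$ as above and applying the Duhamel integral formulation componentwise, I conclude $w,z\ge 0$ on $[0,\tau)$, i.e.\ $(u_1,v_1)\leq_2 (u_2,v_2)$. Part (3) is then immediate: the two solutions of \eqref{vol-lot-non-disper-DC} with initial data $(u_i,v_i)$ are simultaneously a sub- and a super-solution, and (2) applies. Again the argument is identical for \eqref{vol-lot-non-disper-NC} and \eqref{vol-lot-non-disper-PC}.

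For part (4), I use part (3) together with scalar comparison to rule out blow-up. From (1), $u,v\ge 0$, so $u_t \le \nu_1[\int_D\kappa(y-x)u(t,y)dy-u] + u(a_{1M}-b_{1L}u)$ and analogously for $v$. The scalar logistic nonlocal equation $U_t = \nu_1[\int_D \kappa(y-x)U dy-U]+U(a_{1M}-b_{1L}U)$ admits the constant super-solution $\bar U = a_{1M}/b_{1L}$ (or any larger constant), so by the scalar analogue of the comparison principle (a special case of what was proved for part (2)), $u(t,\cdot)\le \max\{\|u_0\|_{X_i},a_{1M}/b_{1L}\}$ on its interval of existence. The analogous bound holds for $v$. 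Since the solution stays bounded in $X_i$, the standard continuation argument via the semigroup generated by the linear nonlocal operator extends the solution to all $t>0$.

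The main obstacle I anticipate is the competitive-order comparison in part (2): unlike the random-dispersal (parabolic PDE) setting, there is no strong maximum principle or smoothing to lean on, and the differences $w,z$ are merely continuous, not differentiable in $x$. I will therefore have to carry out the positivity argument purely at the level of the Duhamel integral equation and a Gronwall/iteration argument in time, being careful that the cross-coupling through $\beta z$ and $\gamma w$ does not destroy non-negativity --- this is where the sign conditions $\beta,\gamma\ge 0$ are essential and encode the quasi-monotonicity of the system in the competitive order. Once this is handled cleanly, parts (1), (3), and (4) follow along the lines above without additional difficulty.
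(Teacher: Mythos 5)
Your proposal is correct, and it is essentially the argument the paper relies on: the paper's own ``proof'' is just a citation to the arguments of \cite[Proposition 3.1]{HeNgSh}, which are exactly this standard machinery --- positivity preservation via the shifted Duhamel integral formulation with a Gronwall/iteration step, quasimonotonicity of the differences $w=u_2-u_1$, $z=v_1-v_2$ in the competitive order (your sign conditions $\beta=c_1u_1\ge 0$, $\gamma=b_2v_1\ge 0$ are the key point, and they use the hypothesis $(0,0)\le_1(u_i,v_i)$), and global existence from the nonnegativity plus the scalar logistic bound $u\le\max\{\|u_0\|_{X_i},a_{1M}/b_{1L}\}$. The only detail worth making explicit is that in part (2) you should work on compact subintervals of $[0,\tau)$ so the coefficients $\alpha,\delta$ are bounded, and in part (3) note that part (1) supplies the nonnegativity hypothesis of part (2) for all $t$, not just $t=0$.
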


\begin{proof}
It follows from the arguments of \cite[Proposition 3.1]{HeNgSh}.
\end{proof}

\section{Existence, Uniqueness, and Stability  of Coexistence States}

In this section, we investigate the existence, uniqueness, and stability of coexistence states of
 \eqref{vol-lot-non-disper-DC},
\eqref{vol-lot-non-disper-NC}, and
\eqref{vol-lot-non-disper-PC},   and prove Theorem A. We first prove the following theorem.

\begin{theorem}
\label{main-thm}
Assume that
$\frac{\inf_{t\in\RR}b_{1}(t,x)}{\sup_{t\in\RR}b_{2}(t,x)}>\frac{\sup_{t\in\RR}c_{1}(t,x)}{\inf_{t\in\RR}c_{2}(t,x)}$ for each $x\in\bar D$. If $(u^{**}(t,x),v^{**}(t,x))$ is a measurable
coexistence state of \eqref{vol-lot-non-disper-DC} (resp.
\eqref{vol-lot-non-disper-NC},
\eqref{vol-lot-non-disper-PC}),  then $(u^{**}(t,x),v^{**}(t,x))$ is continuous in $x\in\bar D$.
\end{theorem}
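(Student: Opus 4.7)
The plan is to fix an arbitrary $x_0\in\bar D$ and view the fiber $t\mapsto(u^{**}(t,x_0),v^{**}(t,x_0))$ as a positive $T$-periodic solution of a planar Carath\'eodory ODE in which $x_0$ appears only as a parameter, and then to show that this ODE has a unique positive $T$-periodic solution that depends continuously on $x_0$. For \eqref{vol-lot-non-disper-DC}, writing $h_i(t,x):=\nu_i\int_D\kappa(y-x)w_i(t,y)\,dy$ with $w_1:=u^{**}$ and $w_2:=v^{**}$, the pair $(\phi,\psi):=(u^{**}(\cdot,x_0),v^{**}(\cdot,x_0))$ satisfies
\begin{align*}
\phi'&=h_1(t,x_0)+\phi\bigl[-\nu_1+a_1(t,x_0)-b_1(t,x_0)\phi-c_1(t,x_0)\psi\bigr],\\
\psi'&=h_2(t,x_0)+\psi\bigl[-\nu_2+a_2(t,x_0)-b_2(t,x_0)\phi-c_2(t,x_0)\psi\bigr],
\end{align*}
with the $-\nu_i$ terms adjusted in the obvious way for \eqref{vol-lot-non-disper-NC} and \eqref{vol-lot-non-disper-PC}. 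Before treating this ODE, I would first establish that $u^{**},v^{**}$ are uniformly bounded on $\RR\times\bar D$, either directly from the logistic self-regulation in the equations combined with $T$-periodicity, or by a comparison argument against the semi-trivial state $(u^*,0)$ of Proposition \ref{semi-trivial-prop} using Proposition \ref{comparison-prop}. With this bound, and since $\kappa$ is smooth with compact support and $a_i,b_i,c_i$ are smooth in $x$, both the coefficients and the forcings $h_i(\cdot,x)$ of the ODE depend continuously on $x$ uniformly in $t$.

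The core of the argument is uniqueness of the positive $T$-periodic solution of the planar ODE under the hypothesis $\inf_t b_1(t,x)\cdot\inf_t c_2(t,x)>\sup_t b_2(t,x)\cdot\sup_t c_1(t,x)$ for each $x$. Given two positive $T$-periodic solutions $(\phi_i,\psi_i)$, $i=1,2$, I would compute
\[
(\ln\phi_i)'=\frac{h_1}{\phi_i}-\nu_1+a_1-b_1\phi_i-c_1\psi_i,\qquad (\ln\psi_i)'=\frac{h_2}{\psi_i}-\nu_2+a_2-b_2\phi_i-c_2\psi_i,
\]
integrate over $[0,T]$ using periodicity, and combine the resulting identities for $i=1,2$ by a weighted linear combination that, thanks to the strict inequality in the dominance hypothesis and the positivity of $h_i$, forces $\phi_1\equiv\phi_2$ and $\psi_1\equiv\psi_2$. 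An equivalent route is to observe that the planar ODE is monotone in the competitive order $\leq_2$ on $\RR_+^2$ (Proposition \ref{comparison-prop}(2) applied with $x$ frozen), and that under the dominance condition the positive $T$-periodic solution is globally attracting and hence unique.

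With uniqueness in hand, standard continuous dependence for periodic solutions of planar ODEs yields that the unique positive $T$-periodic solution depends continuously in $C([0,T];\RR^2)$ on the ODE's coefficients and forcings, which in turn depend continuously on $x$. Hence $x\mapsto(u^{**}(\cdot,x),v^{**}(\cdot,x))$ is continuous, and in particular $(u^{**},v^{**})$ is continuous in $x\in\bar D$ (respectively in $x\in\RR^N$ for \eqref{vol-lot-non-disper-PC}). The main obstacle will be the uniqueness step: the positive forcings $h_i$ spoil the clean integral identities available for pure Lotka--Volterra ODEs, so one must carefully exploit the strict dominance inequality, either through a weighted integration or via a monotone-dynamical-systems argument in the competitive order; once uniqueness is in place, the continuous-dependence step is routine.
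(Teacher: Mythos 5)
Your proposal takes essentially the same route as the paper: freeze $x$, regard $(u^{**}(\cdot,x),v^{**}(\cdot,x))$ as a positive $T$-periodic solution of the forced competitive planar ODE (the paper's equation \eqref{aux-eq1} with forcings $d_i(t,x)=\int_D\kappa(y-x)w_i(t,y)\,dy$), prove uniqueness of that periodic solution under the dominance condition (the paper's Lemma \ref{main-lm1}), and conclude continuity in $x$ from continuous dependence of the unique periodic solution on the smoothly-varying coefficients and forcings. One small refinement: the two routes you present as alternatives for uniqueness are in fact combined in the paper's proof of Lemma \ref{main-lm1} --- monotone iteration in the competitive order first produces \emph{ordered} extremal periodic solutions $(u^{\pm},v^{\pm})$, and only then do the log-derivative integral identities (made strict precisely by the positivity of the forcings, as you observed) force them to coincide; without that ordering the weighted integral argument alone would not close.
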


Observe that, by Theorem \ref{main-thm}, to prove Theorem A, it suffices to prove the existence of a measurable coexistence state.
To prove Theorem \ref{main-thm}, we
 first prove a lemma.

\begin{lemma}
\label{main-lm1}
Consider
\begin{equation}
\label{main-ode-eq1}
\begin{cases}
u_t=u(a_1(t)-b_1(t)u-c_1(t)v)+d_1(t)\cr
v_t=v(a_2(t)-b_2(t)u-c_2(t)v)+d_2(t),
\end{cases}
\end{equation}
where $b_i(\cdot)$, $c_i(\cdot)$, and $d_i(\cdot)$ ($i=1,2$) are positive continuous periodic functions
with period $T$. Assume 
$$
\frac{b_{1L}}{b_{2M}}>\frac{c_{1M}}{c_{2L}}.
$$
Then \eqref{main-ode-eq1} has a unique time periodic positive solution.
\end{lemma}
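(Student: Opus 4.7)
The plan is to establish existence and uniqueness separately, with the substantive ingredient being an integral identity that activates the weak-competition hypothesis $b_{1L}c_{2L}>b_{2M}c_{1M}$.

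\emph{Existence.} Because $d_1,d_2>0$ pointwise, I would first observe that no trajectory starting in the closed first quadrant can reach a coordinate axis: if $u(t_0)=0$ then $\dot u(t_0)=d_1(t_0)>0$. The upper comparison $u_t\le u(a_{1M}-b_{1L}u)+d_{1M}$ (and its $v$-analogue) furnishes a common a priori upper bound $M$; for $u,v\in[0,M]$ with $u$ small, $u_t\ge d_{1L}+[a_{1L}-(b_{1M}+c_{1M})M]u>0$, and likewise for $v$, so a square $R=[m,M]\times[m,M]$ with $0<m<M$ is positively invariant. Applying Brouwer's theorem to the continuous Poincar\'e period map $\Pi:R\to R$, $\Pi(u_0,v_0)=(u(T;u_0,v_0),v(T;u_0,v_0))$, yields a fixed point and hence a positive $T$-periodic solution.

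\emph{Uniqueness for $\le_2$-comparable pairs.} For two positive $T$-periodic solutions with $(u_1,v_1)\le_2(u_2,v_2)$ in the competitive order, I would set $\phi:=u_2-u_1\ge0$ and $\psi:=v_1-v_2\ge0$. Differentiating $\ln(u_2/u_1)$ and integrating over $[0,T]$ using periodicity produces $\int_0^T(b_1+d_1/(u_1u_2))\phi\,dt=\int_0^T c_1\psi\,dt$; the analogous computation for $\ln(v_1/v_2)$ yields $\int_0^T b_2\phi\,dt=\int_0^T(c_2+d_2/(v_1v_2))\psi\,dt$. Bounding each side by inf/sup constants gives $b_{1L}\int_0^T\phi\,dt\le c_{1M}\int_0^T\psi\,dt$ and $b_{2M}\int_0^T\phi\,dt\ge c_{2L}\int_0^T\psi\,dt$; if $\int_0^T\psi>0$, dividing these conflicts with $b_{1L}/b_{2M}>c_{1M}/c_{2L}$. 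Hence $\psi\equiv0$, and then the first identity forces $\phi\equiv0$.

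\emph{Reduction of the general case.} Since the off-diagonal partials $\partial(u_t)/\partial v=-c_1u\le0$ and $\partial(v_t)/\partial u=-b_2v\le0$ make the system competitive, Kamke's theorem gives that the flow, and with it $\Pi$, is $\le_2$-monotone. For arbitrary positive periodic solutions $(u_i,v_i)$, $i=1,2$, the $\le_2$-supremum $(\bar u_0,\bar v_0):=(\max_i u_i(0),\min_i v_i(0))$ dominates each $(u_i(0),v_i(0))$ in $\le_2$, so monotonicity and periodicity give $\Pi(\bar u_0,\bar v_0)\ge_2 (u_i(T),v_i(T))=(u_i(0),v_i(0))$ and hence $\Pi(\bar u_0,\bar v_0)\ge_2(\bar u_0,\bar v_0)$; the iterates $\Pi^n(\bar u_0,\bar v_0)$ are $\le_2$-monotone and remain bounded, converging to a fixed point that defines a periodic solution $\ge_2$ both $(u_i,v_i)$. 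A symmetric infimum construction produces a periodic solution $\le_2$ both. These two bracketing periodic solutions are $\le_2$-comparable, so the previous step forces them to coincide, squeezing $(u_1,v_1)=(u_2,v_2)$. The main obstacle is the sharp integral identity together with the inf/sup estimate that brings the weak-competition hypothesis to bear; existence and the Kamke-monotonicity reduction are both standard.
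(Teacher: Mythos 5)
Your proof is correct, and its decisive step --- integrating $\frac{d}{dt}\ln(u_2/u_1)$ and $\frac{d}{dt}\ln(v_1/v_2)$ over a period so that the immigration terms $d_i$ enter with a favorable sign, then bounding by $b_{1L}$, $b_{2M}$, $c_{1M}$, $c_{2L}$ to contradict $b_{1L}/b_{2M}>c_{1M}/c_{2L}$ --- is exactly the identity at the heart of the paper's own argument (which it attributes to Zhao's thesis and only sketches). The surrounding architecture is genuinely different, however. The paper obtains existence and comparability simultaneously by monotone iteration from the semitrivial data $(u^*(0),0)$ and $(0,v^*(0))$, where $u^*$, $v^*$ solve the scalar periodic logistic-with-immigration equations: the Poincar\'{e} iterates decrease (resp.\ increase) in the competitive order $\le_2$, converge to extremal periodic solutions $(u^{\pm},v^{\pm})$ attracting all positive solutions, and the integral identity then forces $(u^+,v^+)\equiv(u^-,v^-)$. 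You instead get existence from Brouwer's theorem on a positively invariant rectangle (correct: $d_i>0$ makes the axes repelling, and your choices of $m$ and $M$ work), and you reduce two arbitrary periodic solutions to a $\le_2$-comparable pair by iterating the period map from their $\le_2$-supremum and infimum --- a bracketing the paper never needs because its extremal solutions dominate everything from the outset. Your handling of the comparable case is in one respect tidier: you treat non-strict inequalities directly (if $\int_0^T\psi\,dt=0$ then $\psi\equiv 0$, whence $\phi\equiv 0$), whereas the paper asserts the strict ordering $u^-(t)<u^+(t)$, $v^-(t)>v^+(t)$ via strong monotonicity without comment (and its display with $c_{1L}$ is a typo for $c_{2L}$). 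The trade-off is that the paper's construction also yields global attractivity of the periodic solution --- the ``stable'' qualifier that the proof of Theorem \ref{main-thm} later invokes to deduce continuity of $(\tilde u^{**}(t;x),\tilde v^{**}(t;x))$ in $x$ --- which your Brouwer-based existence does not by itself provide; this is harmless for the lemma as stated, but worth noting given how the lemma is used downstream.
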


\begin{proof}
It follows from Theorem 2.3.1 in \cite{peng-zhao}. In the following, we provide the idea of proof.

First of all, there is a unique time periodic stable solution $u^*(t)$ of
$$
\dot u=u(a_1(t)-b_1(t)u)+d_1(t)
$$
and there is a unique time periodic stable solution $v^*(t)$ of
$$
\dot v=v(a_1(t)-c_2(t)v)+d_2(t)
$$
(see \cite[Proposition 2.2]{peng-zhao}).
Then, by Proposition \ref{comparison-prop},
$$(0,v^*(0))\ll_2(u(T;u^*(0),0),v(T;u^*(0),0))\ll _2(u^*(0),0).
$$
This implies that
$$
 (u((n+1)T;u^*(0), 0),v((n+1)T;u^*(0), 0))\ll_2 (u(nT;u^*(0), 0),v(nT;u^*(0),0))\ll _2(u^*(0),0)
$$
and
$$
(0,v^*(0))\ll_2  (u((n+1)T;u^*(0), 0),v((n+1)T;u^*(0), 0))\ll_2 (u(nT;u^*(0), 0),v(nT;u^*(0),0))
$$

Hence $\lim_{n\to\infty} (u(nT;u^*(0), 0),v(nT;u^*(0), 0))$ exists.
Let
$$
(u_0^+,v_0^+)=\lim_{n\to\infty} (u(nT;u^*(0), 0),v(nT;u^*(0), 0)).
$$  We have that
$$
(u^+(t),v^+(t)):=(u(t;u^+_0,v^+_0),v(t;u^+_0, v^+_0))$$
 is a periodic solution of \eqref{main-ode-eq1}.

 Next, by comparison principle for competition
 systems of ODEs, for any
$(u_0,v_0)\in\RR^+\times\RR^+$ with $(u_0,v_0)\ge _2 (u^+(0),v^+(0))$,
$$
(u^+(t),v^+(t))\le_2 (u(t;u_0,v_0),v(t;u_0,v_0))
$$
for all $t\ge 0$. Note that
$u(t;u_0,v_0)$ satisfies
$$
\dot u=u(a_1(t)-b_1(t)u-c_1(t)v(t;u_0,v_0))+d_1(t)<u(a_1(t)-b_1(t)u)+d_1(t).
$$
Then there is $N^*\ge 1$ such that
$$
u(nT;u_0,v_0)\le u^*(0)
$$
for $n\ge N^*$.  This implies that
$$
(u(nT;u_0,v_0),v(nT;u_0,v_0))\le_2(u^*(0),0)
$$
for $n\ge N^*$.
It can then be proved that
$$
\lim_{t\to\infty}[(u(t;u_0,v_0),v(t;u_0,v_0))-(u^+(t),v^+(t))]=0.
$$

Similarly, we can prove the existence of the limit
$$(u_0^-,v_0^-)=\lim_{n\to\infty}(u(nT;0,v^*(0)),v(nT;0,v^*(0))$$
 and that
$(u^-(t),v^-(t)):=(u(t;u_0^-,v_0^-),v(t;u_0^-,v_0^-))$ is a periodic solution of \eqref{main-ode-eq1} satisfying that
$$
\lim_{t\to\infty}[(u(t;u_0,v_0),v(t;u_0,v_0))-(u^-(t),v^-(t))]=0
$$
for any $(u_0,v_0)\in\RR^+\times\RR^+$ with $(u_0,v_0)\le_2 (u^-(0),v^-(0))$.

It now suffices to prove that
$$
(u^{-}(t),v^{-}(t))\equiv (u^{+}(t),v^{+}(t)).
$$
This can be proved by contradiction.
Assume that
$$
(u^{-}(t),v^{-}(t))\not \equiv (u^{+}(t),v^{+}(t)).
$$
Then
we have
$$
u^{-}(t)<u^{+}(t),\,\,\, v^{-}(t)>v^{+}(t)\quad \forall\,\, t\in \RR.
$$
Observe that
$$
\frac{d}{dt}\ln u^\pm (t)=a_1(t)-b_1(t)u^\pm (t)-c_1(t) v^\pm (t)+\frac{d_1(t)}{u^\pm(t)}
$$
and
$$
\frac{d }{dt}\ln v^\pm (t)=a_2(t)-b_2(t)u^\pm(t)-c_2(t) v^\pm(t)+\frac{d_2(t)}{v^\pm(t)}.
$$
Hence
$$
\frac{d }{dt}\ln \frac{u^-(t)}{u^+(t)}=b_1(t)[u^+(t)-u^-(t)]+c_1(t)[v^+(t)-v^-(t)]+d_1(t)\Big[\frac{1}{u^-(t)}-\frac{1}{u^+(t)}\Big]
$$
and
$$
\frac{d}{dt}\ln \frac{v^-(t)}{v^+(t)}=b_2(t)[u^+(t)-u^-(t)]+c_2(t)[v^+(t)-v^-(t)]+d_2(t)\Big[\frac{1}{v^-(t)}-\frac{1}{v^+(t)}\Big].
$$
It then follows that
$$
0=\int_0^T \frac{d }{dt}\ln \frac{u^-(t)}{u^+(t)}dt> \int_0^T \Big[b_1(t)[u^+(t)-u^-(t)]+c_1(t)[v^+(t)-v^-(t)]\Big]dt
$$
and
$$
0=\int_0^T \frac{d}{dt}\ln \frac{v^-(t)}{v^+(t)}dt <\int_0^T \Big[b_2(t)[u^+(t)-u^-(t)]+c_2(t)[v^+(t)-v^-(t)]\Big]dt.
$$
This implies that
$$
b_{1L}\int_0^T[u^+(t)-u^-(t)]dt<c_{1M}\int_0^T [v^-(t)-v^+(t)]dt
$$
and
$$
b_{2M}\int_0^T[u^+(t)-u^-(t)]dt>c_{1L}\int_0^T [v^{-}(t)-v^{+}(t)]dt.
$$
Hence
$$
\frac{b_{1L}}{c_{1M}}<\frac{\int_0^T [v^{-}(t)-v^{+}(t)]dt}{\int_0^T[u^+(t)-u^{-}(t)]dt}<\frac{b_{2M}}{c_{2L}}.
$$
This is a contradiction.
The theorem is thus proved.
\end{proof}

\begin{proof} [Proof of Theorem \ref{main-thm}]
We prove the theorem for   \eqref{vol-lot-non-disper-DC}. It can be proved similarly for \eqref{vol-lot-non-disper-NC} and \eqref{vol-lot-non-disper-PC}.

For any given $x\in\bar D$,
let $d_1(t,x)=\int_D k(y-x)u^{**}(t,y)dy$ and $d_2(t,x)=\int_D k(y-x)v^{**}(t,y)dy$.
Then $d_1(t,x)$ and $d_2(t,x)$ are positive, periodic in $t$ with period $T$, and smooth in $x$.
For given $x\in\bar D$,
$(u(t;x),v(t;x))=(u^{**}(t,x),v^{**}(t,x))$ satisfies the following competitive systems of ODEs,
\begin{equation}
\label{aux-eq1}
\begin{cases}
u_t(t)=u(t)\big(-\nu_1+a_1(t,x)-b_1(t,x)u(t)-c_1(t,x)v(t)\big)+d_1(t,x)\cr
v_t(t)=v(t)\big(-\nu_2+a_2(t,x)-b_2(t,x)u(t)-c_2(t,x)v(t)\big)+d_2(t,x).
\end{cases}
\end{equation}
By Lemma \ref{main-lm1}, \eqref{aux-eq1} has a unique stable time periodic coexistence state $(\tilde u^{**}(t;x),\tilde v^{**}(t;x))$.
By the smoothness of $a_i(t,x)$, $b_i(t,x)$, $c_i(t,x)$, and $d_i(t,x)$ in $x$ for $i=1,2$, we have that
 $(\tilde u^{**}(t;x),\tilde v^{**}(t;x))$ is continuous in $x$.
Therefore, $(u^{**}(t,x),v^{**}(t,x))= (\tilde u^{**}(t;x),\tilde v^{**}(t;x))$  is continuous in $x$ and the theorem then follows.
\end{proof}

We now prove Theorem A.

\begin{proof} [Proof of Theorem A]  We prove the theorem for \eqref{vol-lot-non-disper-DC} by applying Theorem \ref{main-thm} and  modifying the arguments
in \cite[Theorem A]{HeNgSh}. It can be proved similarly for \eqref{vol-lot-non-disper-NC} and \eqref{vol-lot-non-disper-PC}.

(1) Let $(u^*(\cdot,\cdot),0)$ and $(0,v^*(\cdot,\cdot))$ be the semitrivial  time periodic solutions of \eqref{vol-lot-non-disper-DC}.
Let $K,I:X_1\to X_1$ be given by
$$
(Ku)(x)=\int_D \kappa(y-x)u(y)dy,\quad (I u)(x)=u(x)\quad \forall\,\, u\in X_1.
$$

First, note  that
\begin{align*}
u^*_t(t,x)&=\nu_1[K-I]u^*(t,x)+(a_1(t,x)-b_1(t,x)u^*(t,x))u^*(t,x)
\end{align*}
and
$$ u^*(t,x)\leq
\frac{a_{1M}}{b_{1L}}. $$
 We then have \begin{align*}
a_2(t,x)-b_2(t,x)u^*(t,x)&\ge a_2(t,x)-b_2(t,x)\frac{a_{1M}}{b_{1L}}\\
&\geq a_{2L}-\frac{b_{2M}a_{1M}}{b_{1L}}\\
&>-\nu_2\lambda_0^D. \end{align*} Note that $\lambda_1(\nu_2, a_{2L}-\frac{b_{2M}a_{1M}}{b_{1L}})>0$.
By Proposition \ref{pev-prop2}, $\lambda:=\lambda_1(\nu_2, a_{2L}-\frac{b_{2M}a_{1M}}{b_{1L}})$ is the principal eigenvalue of
$\nu_2[K-I]u+
[a_{2L}-\frac{b_{2M}a_{1M}}{b_{1L}}]I$.
Let $\phi^*(x)$ be a positive principal eigenfunction of $$ \nu_2[K-I]u+
[a_{2L}-\frac{b_{2M}a_{1M}}{b_{1L}}] u=\lambda u. $$ Let $v_\epsilon^+(t,x)=\epsilon \phi^*$ and
$u_\epsilon^+(t,x)\equiv u^*(t,x)$. We have $$
\begin{cases}
(u_\epsilon^+)_t\geq \nu_1[K-I]u_\epsilon^++u_\epsilon^+(a_1(t,x)-b_1(t,x)u_\epsilon^+-c_1(t,x)v_\epsilon^+)\cr
(v_\epsilon^+)_t\leq \nu_2[K-I]v_\epsilon^++v_\epsilon^+(a_2(t,x)-b_2(t,x)u_\epsilon^+-c_2(t,x)v_\epsilon^+)
\end{cases}
$$ for $0<\epsilon\ll 1$. Hence $(u_\epsilon^+(t,x),v_\epsilon^+(t,x))$ is a super-solution of
 \eqref{vol-lot-non-disper-DC}. This implies that
  \begin{align*}(0, v^*(t,\cdot))&\leq_2 (u(t+n_2T,\cdot;u^*(0,\cdot),\epsilon
\phi^*),v(t+n_2T,\cdot;u^*(0,\cdot),\epsilon\phi^*)\\
&\leq_2 (u(t+n_1T,\cdot;u^*(0,\cdot),\epsilon
\phi^*),v(t+n_1T,\cdot;u^*(0,\cdot),\epsilon\phi^*)\\
&\leq_2(u^*(t,\cdot),\epsilon \phi^*) \end{align*} for any $t\ge 0$ and
positive integers $n_2>n_1$. Hence there are Lebesgue measurable functions $u^{**}_{+,\epsilon},v^{**}_{+,\epsilon}:\bar
\RR^+\times D\to [0,\infty)$ such that $$(u(t+nT,x;u^*,\epsilon \phi^*),v(t+nT,x;u^*,\epsilon\phi^*))\to
(u^{**}_{+,\epsilon}(t,x),v^{**}_{+,\epsilon}(t,x))\quad \forall \, t\ge 0,\ x\in\bar D $$ as $n\to\infty$.
Moreover,
$$
u^{**}_{+,\epsilon}(t+T,x)=u^{**}_{+,\epsilon}(t,x),\quad v^{**}_{+,\epsilon}(t+T,x)=v^{**}_{+,\epsilon}(t,x),
$$
and
$$
0\leq u^{**}_{+,\epsilon}(t,x)\leq u^*(t,x),\quad \epsilon \phi^*(x)\leq v^{**}_{+,\epsilon}(t,x)\quad\forall \,\, t\ge 0\,\, x\in\bar
D.
$$
Note that
\begin{equation*}
\begin{split}
&u(t+nT,x;u^*(0,\cdot),\epsilon\phi^*)\\
&=u(nT,x;u^*(0,\cdot),\epsilon\phi^*)\\
&\ +\nu_1\int_0^ {t}\Big[\int_D \kappa(y-x)u(nT+\tau,y;u^*(0,\cdot),\epsilon\phi^*)dy-u(nT+\tau,x;u^*(0,\cdot),\epsilon \phi^*)\Big]d\tau\\
&\quad +\int_0^{t} \Big[u(nT+\tau,x;u^*(0,\cdot),\epsilon \phi^*)(a_1(\tau,x)-b_1(\tau,x)u(nT+\tau,x;u^*(0,\cdot),\epsilon
\phi^*)\\
&\hspace{5cm} -c_1(\tau,x)v(nT+\tau,x;u^*(0,\cdot),\epsilon \phi^*))\Big]d\tau
 \end{split}
\end{equation*}
and
\begin{equation*}
\begin{split}
&v(nT+t,x;u^*(0,\cdot),\epsilon\phi^*)\\
&=v(nT,x;u^*(0,\cdot),\epsilon\phi^*)\\
&\ +\nu_2\int_0^ {t}\Big[\int_D \kappa(y-x)v(nT+\tau,y;u^*(0,\cdot),\epsilon\phi^*)dy-v(nT+\tau,x;u^*(0,\cdot),\epsilon \phi^*)\Big]d\tau\\
&\quad +\int_0^{t} \Big[v(nT+\tau,x;u^*(0,\cdot),\epsilon \phi^*)(a_2(\tau,x)-b_2(\tau,x)u(nT+\tau,x;u^*(0,\cdot),\epsilon
\phi^*)\\
&\hspace{5cm} -c_2(\tau,x)v(nT+\tau,x;u^*(0,\cdot),\epsilon \phi^*))\Big]d\tau
 \end{split}
\end{equation*}
for any $t>0$. $n\in\NN$, and $x\in\bar D$. Letting $n\to\infty$, by Lebesgue dominating convergent theorem,
\begin{align*}
u_{+,\epsilon}^{**}(t,x)=&u^{**}_{+,\epsilon}(0,x)
+\nu_1\int_0^ {t}\Big[\int_D \kappa(y-x)u_{+,\epsilon}^{**}(\tau,y)dy-u_{+,\epsilon}^{**}(\tau,x)\\
&\quad +u_{+,\epsilon}^{**}(\tau,x)(a_1(\tau,x)-b_1(\tau,x)u_{+,\epsilon}^{**}(\tau,x)
 -c_1(\tau,x)v_{+,\epsilon}^{**}(\tau,x)\Big]d\tau
\end{align*}
\begin{align*}
v_{+,\epsilon}^{**}(t,x)=&v^{**}_{+,\epsilon}(0,x)
+\nu_2\int_0^ {t}\Big[\int_D
\kappa(y-x)v_{+,\epsilon}^{**}(\tau,y)dy-v_{+,\epsilon}^{**}(\tau,x)\\
&\quad
+v_{+,\epsilon}^{**}(\tau,x)(a_2(\tau,x)-b_2(\tau,x)u_{+,\epsilon}^{**}(\tau,x)-c_2(\tau,x)v_{+,\epsilon}^{**}(\tau,x)\Big]d\tau
\end{align*}
for all $t>0$ and $x\in\bar D$. It then follows that $(u_{+,\epsilon}^{**}(t,x),v_{+,\epsilon}^{**}(t,x))$  is differentiable in $t$ and satisfies
\eqref{vol-lot-non-disper-DC}.

Similarly, let $\psi^*$ be a positive principal eigenfunction of $$
\nu_1[K-I]u+[a_{1L}-\frac{c_{1M}a_{2M}}{c_{2L}}]u=\lambda u, $$
where $\lambda:=\lambda_1(\nu_1, a_{1L}-\frac{c_{1M}a_{2M}}{c_{2L}})$.
 We have that for $0<\epsilon\ll
1$, there are Lebesgue measurable functions $u^{**}_{-,\epsilon},v^{**}_{-,\epsilon}:\bar\RR^+\times \bar D\to [0,\infty)$ such
that $$
 (u(nT+t,x; \epsilon \psi^*,v^*(0,\cdot)),v(nT+t,x;\epsilon \psi^*,v^*(0,\cdot))\to (u^{**}_{-,\epsilon}(t,x),v^{**}_{-,\epsilon}(t,x))\quad
 \forall \,\, t\ge 0,\, x\in\bar D
$$ as $n\to\infty$,
$$
u_{-,\epsilon}^{**}(t+T,x)=u_{-,\epsilon}^{**}(t,x),\,\,\, v_{-,\epsilon}^{**}(t+T,x)=v_{-,\epsilon}^{**}(t,x),
$$
and
$$ \epsilon \psi^*(x)\leq u^{**}_{-,\epsilon}(t,x),\quad 0\leq
v^{**}_{-,\epsilon}(t,x)\leq v^*(t,x)\quad \forall x\in\bar D. $$
By similar arguments as above, $(u_{-,\epsilon}^{**}(t,x),v_{-,\epsilon}^{**}(t,x))$  is differentiable in $t$ and  satisfies \eqref{vol-lot-non-disper-DC}.

Observe that for $0<\epsilon\ll 1$, $$ \epsilon \psi^*(x)\leq u^{**}_{-,\epsilon}(t,x)\leq
u^{**}_{+,\epsilon}(t,x)\leq u^*(t,x),\,\, \epsilon \phi^*(x)\leq v^{**}_{+,\epsilon}(t,x)\leq
v^{**}_{-,\epsilon}(t,x)\leq v^*(t,x)
$$
for all $t\ge 0$ and $ x\in\bar D$.   From $a_{1L}>-\nu_1
\lambda_0^D+\frac{c_{1M}a_{2M}}{c_{2L}}$ and $a_{2L}>-\nu_2 \lambda_0^D+\frac{b_{2M}a_{1M}}{b_{1L}}$
(note that $\lambda_0^D< 0$), we have
$\frac{b_{1L}}{c_{1M}}>\frac{b_{2M}}{c_{2L}}$. By Theorem \ref{main-thm}, both
$(u^{**}_{-,\epsilon}(t,x),v^{**}_{-,\epsilon}(t,x))$ and $(u^{**}_{+,\epsilon}(t,x),v^{**}_{+,\epsilon}(t,x))$ are in ${\rm Int}X^+\times {\rm Int}X^+$ and hence  are coexistence states of \eqref{vol-lot-non-disper-DC}.

(2)  Let $(u^*(\cdot,\cdot),0)$ and $(0,v^*(\cdot,\cdot))$ be the semitrivial  time periodic solutions of \eqref{vol-lot-non-disper-DC}. Let $\nu=\nu_1(=\nu_2)$ and $a(t,x)=a_1(t,x)(=a_2(t,x))$ for $x\in\bar D$.  Note that \begin{equation} \label{thmb-eq1}
u_t^*(t,x)=\nu[K-I]u^*(t,x)+(a(t,x)-b_1(t,x) u^*(t,x))u^*(t,x). \end{equation}
 By  $\sup_{x\in\bar D} b_2(t,x)<\inf_{x\in \bar D} b_1(t,x)$ for any $t\in\RR$, we have $b_2(t,x)<b_1(t,x)$ for $t\in\RR$ and $x\in\bar D$. Then
$$ a(t,x)-b_2(t,x)u^*(t,x)>a(t,x)-b_1 (t,x)u^*(t,x) \quad \forall\, t\in\RR\,\,  x\in\bar D. $$ Let
$$
\epsilon^*_+=\inf_{t\in\RR, x\in\bar D} (b_1(t,x)-b_2(t,x))u^*(t,x) (>0). $$ Then $$
a(t,x)-b_2(t,x)u^*(t,x)>a(t,x)-b_1(t,x)u^*(t,x)+\frac{\epsilon^*_+}{2}\quad \forall t\in\RR,\,\, x\in\bar D. $$ Hence
$v_\epsilon^+(t,x)=\epsilon u^*(t,x)$ ($0<\epsilon \ll 1$) is a strictly sub-solution of $$
v_t=\nu[K-I]v+(a(t,x)-b_2 (t,x)u^*(t,x))v. $$ By the similar  arguments  as in (1), for $0<\epsilon\ll
1$, there are Lebesgue measurable functions $u^{**}_{+,\epsilon},v^{**}_{+,\epsilon}:\bar\RR^+\times \bar D\to [0,\infty)$ such
that $$ (u(nT+t,x;u^*(0,\cdot),\epsilon u^*(0,\cdot)),v(nT+t,x;u^*(0,\cdot),\epsilon u^*(0,\cdot)))\to
(u^{**}_{+,\epsilon}(t,x),v^{**}_{+,\epsilon}(t,x))\  \forall t\ge 0,\,\,  x\in\bar D $$ as $n\to\infty$,
$$
u^{**}_{+,\epsilon}(t+T,x)=u^{**}_{+,\epsilon}(t,x),\,\,\, v^{**}_{+,\epsilon}(t+T,x)=v^{**}_{+,\epsilon}(t,x),
$$
 and
$(u_{+,\epsilon}^{**}(t,x),v_{+,\epsilon}^{**}(t,x))$ satisfies \eqref{vol-lot-non-disper-DC}.

Similarly, by $\inf_{x\in\bar D}c_2(t,x)>\sup_{x\in\bar D}c_1(t,x)$ for all $t\in\RR$, we have $$ a(t,x)-c_1(t,x)v^*(t,x)>a(t,x)-c_2(t,x) v^*(t,x). $$ Set
$$ \epsilon^*_-=\inf_{t\in\RR,x\in\bar D} (c_2(t,x)-c_1(t,x))v^*(t,x) (>0), $$ then
$$
a(t,x)-c_1(t,x) v^*(t,x)>a(t,x)-c_2(t,x) c^*(t,x)+\frac{\epsilon^*_-}{2}.
$$
Thus, given $0<\epsilon\ll 1$, there are Lebesgue measurable functions
$u^{**}_{-,\epsilon},v^{**}_{-,\epsilon}:\bar\RR^+\times \bar D\to [0,\infty)$ such that $$ (u(t+nT,x;\epsilon
v^*(0,\cdot),v^*(0,\cdot)),v(t+nT,x;\epsilon v^*(0,\cdot),v^*(0,\cdot)))\to (u^{**}_{-,\epsilon}(t,x),v^{**}_{-,\epsilon}(t,x)) \quad\forall t\ge 0,\,\, x\in\bar D
$$ as $n\to\infty$,
$$
u^{**}_{-,\epsilon}(t+T,x)=u^{**}_{-,\epsilon}(t,x),\,\,\, v^{**}_{-,\epsilon}(t+T,x)=v^{**}_{-,\epsilon}(t,x),
$$
 and $(u_{-,\epsilon}^{**}(t,x),v_{-,\epsilon}^{**}(t,x))$ satisfies \eqref{vol-lot-non-disper-DC}.

Then by the similar arguments as in (1),
  $(u^{**}_{\pm,\epsilon}(t,x),v^{**}_{\pm,\epsilon}(t,x))$ belongs to ${\rm Int}X^+\times {\rm Int}X^+$ and hence are
   coexistence states
  of \eqref{vol-lot-non-disper-DC}.

(3) It is a special case of (2). By (2), \eqref{vol-lot-non-disper-DC} has coexistence states. We first prove that the
coexistence state of \eqref{vol-lot-non-disper-DC} is unique.

 Let $(u^{**}(t,x),v^{**}(t,x))$ be any given coexistence state of \eqref{vol-lot-non-disper-DC}. Put $\nu=\nu_1(=\nu_2)$
and $a(\cdot,\cdot)=a_1(\cdot,\cdot)(=a_2(\cdot,\cdot))$. Then
\begin{equation*}
\begin{cases}
u_t^{**}(t,x)=\nu[K-I]u^{**}+u^{**}(a(t,x)-b_1u^{**}-c_2v^{**})+(c_2-c_1)u^{**}v^{**},\quad x\in\bar D\cr
v_t^{**}(t,x)=\nu[K-I]v^{**}+v^{**}(a(t,x)-b_1u^{**}-c_2v^{**})+(b_1-b_2)u^{**}v^{**},\quad x\in\bar D.
\end{cases}
\end{equation*}
Multiplying the first equation by $(b_1-b_2)$ and the second one by $(c_2-c_1)$, we obtain
\begin{align*}
&-(b_1-b_2)u_t^{**}(t,x)+(b_1-b_2)\nu[K-I]u^{**}+(b_1-b_2)u^{**}(a(t,x)-b_1u^{**}-c_2 v^{**})\\
&= -(c_2-c_1)v_t^{**}(t,x)+(c_2-c_1)\nu[K-I]v^{**}+(c_2-c_1)v^{**}(a(t,x)-b_1u^{**}-c_2v^{**}).
\end{align*}
This implies that \begin{equation} \label{uniqueness-eq1}
\phi_t^{**}(t,x)= \nu[K-I]\phi^{**}+ (a(t,x)-b_1u^{**}-c_2
v^{**})\phi^{**}, \end{equation} where $\phi^{**}(t,x)=(b_1-b_2)u^{**}(t,x)-(c_2-c_1)v^{**}(t,x)$. Observe
that \begin{equation} \label{thmb-aux-eq1} a(t,x)-b_1u^{**}(t,x)-c_2 v^{**}(t,x)<a(t,x)-b_1 u^{**}(t,x)-c_1v^{**}(t,x),
\end{equation}
\begin{equation}
\label{thmb-aux-eq2}
u_t^{**}(t,x)=\nu[K-I]u^{**}+(a(t,x)-b_1u^{**}-c_1 v^{**})u^{**}. \end{equation} By
\eqref{thmb-aux-eq2} and Proposition \ref{pev-prop4}, $\lambda(\nu,
a(\cdot,\cdot)-b_1u^{**}-c_1v^{**})$ exists and $\lambda(\nu,a(\cdot,\cdot)-b_1u^{**}-c_1v^{**})=0$. This together with
\eqref{thmb-aux-eq1} implies that \eqref{uniqueness-eq1} has only the trivial solution. Therefore
$\phi^{**}\equiv 0$, that is, \begin{equation} \label{uniqueness-eq2}
v^{**}=\frac{b_1-b_2}{c_2-c_1}u^{**}. \end{equation} By \eqref{uniqueness-eq2}, $u^{**}$ is the
unique positive solution of \begin{equation} \label{uniqueness-eq3}
u_t^{**}=\nu[K-I]u^{**}+\Big[a(t,x)-(b+c_1\cdot \frac{b_1-b_2}{c_2-c_1})u^{**}\Big]u^{**}. \end{equation} By
\eqref{uniqueness-eq2} and \eqref{uniqueness-eq3}, the coexistence state of \eqref{vol-lot-non-disper-DC} is unique.

Next we prove the global stability of the unique coexistence state $(u^{**},v^{**})$. Let $\theta^{*}$ be the
unique time periodic positive solution of \begin{equation} \label{stability-eq1}
u_t= \nu[K-I]u+u(a(t,x)-u)
\end{equation}
(see \cite[Theorem E]{RaSh} for the existence of $\theta^*$).
Then $u^*=\frac{\theta^{*}}{b_1}$ and $v^*=\frac{\theta^*}{c_2}$.

%Note that $b_1>b_2$ and $c_2>c_1$. Then
%$$
%b_1\alpha_++\frac{c_1c_2\beta_+}{b_1}>1\quad \textrm{and}\quad b_2 \alpha_++\frac{c_2c_2\beta_+}{b_2}<1
%$$
%hold for every $\alpha_+,\beta_+>0$ with $\frac{1}{b_1}<\alpha_+<\frac{1}{b_2}$ and $0<\beta_+\ll 1$.
For $\alpha_+,\beta_+>0$ with $\frac{1}{b_1}<\alpha_+<\frac{1}{b_2}$ and $0<\beta_+\ll 1$, let
$u_+=\alpha_+\theta^*$ and $v_+=\beta_+\theta^*$. We then have
$$
\begin{cases}
(u_+)_t\geq \nu[K-I]u_++u_+(a(t,x)-b_1u_+-c_1v_+),\quad x\in\bar D\cr
(v_+)_t\leq \nu[K-I]v_++v_+(a(t,x)-b_2u_+-c_2v_+),\quad x\in\bar
D.
\end{cases}
$$
Therefore, \begin{align*} u(t+n_2T,\cdot;u_+,v_+),v(t+n_2T,\cdot;u_+,v_+))&\leq_2
(u(t+n_1T,\cdot;u_+,v_+),v(t+n_1T,\cdot;u_+,v_+)) \\
&\leq_2 (u_+(t,\cdot),v_+(t,\cdot)) \end{align*} for every
$t\ge 0$ and any positive integers $n_2>n_1$. This implies that $$ u(t,\cdot;u_+,v_+),v(t,\cdot;u_+,v_+))- (u^{**}(t,x),v^{**}(t,x))\to (0,0)\mbox{
as }t\to\infty. $$

Similarly, for $\alpha_-,\beta_->0$ with $\frac{1}{c_2}<\beta_-<\frac{1}{c_1}$
and $0<\alpha_-\ll 1$, let $u_-=\alpha_-\theta^*$ and $v_-=\beta_-\theta^*$. Then
\begin{align*} u(t+n_2T,\cdot;u_-,v_-),v(t+n_2T,\cdot;u_-,v_-))&\geq_2
(u(t+n_1T,\cdot;u_-,v_-),v(t+n_1T,\cdot;u_-,v_-))\\
& \geq_2 (u_-,v_-) \end{align*} for every
$t\ge 0$ and any positive integers $n_2>n_1$, thus $$ u(t,\cdot;u_-,v_-),v(t,\cdot;u_-,v_-))- (u^{**}(t,\cdot),v^{**}(t,\cdot))\to (0,0) \,\, {\rm as}\,\,
t\to\infty. $$

For any given $(u_0,v_0)\in (X^+\setminus\{0\})\times (X^+\setminus\{0\})$ and any $\epsilon>0$, by Proposition
\ref{semi-trivial-prop},
 there is $n\in\NN$
such that $$ (0,v^*+\epsilon)\ll_2 (u(t+nT,\cdot;u_0,v_0),v(t+nT,\cdot;u_0,v_0))\ll_2 (u^*+\epsilon,0)
$$ for $t\geq 0$. Then there are $\alpha_\pm,\beta_\pm>0$ with
$\frac{1}{b_1}<\alpha_+<\frac{1}{b_2}$, $0<\beta_+\ll 1$, and  $\frac{1}{c_2}<\beta_-<\frac{1}{c_1}$,
$0<\alpha_-\ll 1$ such that $$ (\alpha_-\theta^*,\beta_-\theta^*)\leq_2
(u(t,\cdot;u_0,v_0),v(t,\cdot;u_0,v_0))\leq_2 (\alpha_+\theta^*,\beta_+\theta^*) $$ for $t\gg 1$.
It therefore follows that $$ (u(t,\cdot;u_0,v_0),v(t,\cdot;u_0,v_0))-(u^{**}(t,\cdot),v^{**}(t,\cdot))\to (0,0)
$$ as $t\to\infty$. Theorem A is thus proved.
\end{proof}

\section{Extinction}

In this section, we study the extinction dynamics of \eqref{vol-lot-non-disper-DC}, \eqref{vol-lot-non-disper-NC}, and
\eqref{vol-lot-non-disper-PC},
and prove Theorem B.
Let $(u^*(t,x),0)$ and
$(0,v^*(t,x))$ be the two semitrivial periodic solutions of \eqref{vol-lot-non-disper-DC} (resp. \eqref{vol-lot-non-disper-NC},
\eqref{vol-lot-non-disper-PC}). We say that $(u^*,0)$ (resp. $(0,v^*)$) is
{\it globally stable} if for any $(u_0,v_0)\in (X_1^+\setminus\{0\})\times (X_1^+\setminus\{0\})$ (resp. $(u_0,v_0)\in (X_2^+\setminus\{0\})\times (X_2^+\setminus\{0\})$, $(u_0,v_0)\in (X_3^+\setminus\{0\})\times (X_3^+\setminus\{0\})$), $$
(u(t,\cdot;u_0,v_0),v(t,\cdot;u_0,v_0))-(u^*(t,\cdot),0)\to (0,0) $$
$$ ({\rm resp.}\quad (u(t,\cdot;u_0,v_0),v(t,\cdot;u_0,v_0))-(0,v^*(t,\cdot))\to (0,0))
$$ as $t\to\infty$, where $(u(t,\cdot;u_0,v_0),v(t,\cdot;u_0,v_0))$ is the solution of
 \eqref{vol-lot-non-disper-DC} (resp. \eqref{vol-lot-non-disper-NC},
\eqref{vol-lot-non-disper-PC}) with initial $(u_0,v_0)$.

\begin{proof}[Proof of Theorem B] We prove Theorem for \eqref{vol-lot-non-disper-DC}. It can be proved similarly for  \eqref{vol-lot-non-disper-NC}, and
\eqref{vol-lot-non-disper-PC}.

(1) First consider \begin{equation} \label{sub-equation}
\begin{cases}
u_t=\nu[K-I]u+u(a_{1L}-b_{1M}u-c_{1M}v),\quad x\in\bar D\cr v_t=\nu[K-I]v+v(a_{2M}-b_{2L}u-c_{2L}v),\quad x\in\bar D.
\end{cases}
\end{equation} For any given $(u_0,v_0)\in X_1^+\times X_1^+$, let
$(u^-(t,x;u_0,v_0),v^-(t,x;u_0,v_0))$ be the solution of \eqref{sub-equation} with
$(u^-(0,x;u_0,v_0),v^-(0,x;u_0,v_0))=(u_0(x),v_0(x))$.
 Let $(u_-^*,0)$ and $(0,v_-^*)$ be the semitrivial equilibria of \eqref{sub-equation}.

By the arguments of \cite[Theorem B]{HeNgSh},  for any $(u_0,v_0)\in (X_1^+\setminus\{0\})\times (X_1^+\setminus\{0\})$, $$
\lim_{t\to\infty} (u^-(t,\cdot;u_0,v_0),v^-(t,\cdot;u_0,v_0))=(u_-^*,0). $$

For any given $(u_0,v_0)\in (X_1^+\setminus\{0\})\times (X_1^+\setminus\{0\})$, for any $\epsilon_2>0$, by
Proposition \ref{semi-trivial-prop}, there is $\tilde T>0$ such that $$ (0,(1+\epsilon_2)v^*_-)\ll_2
(u^-(t,\cdot;u_0,v_0),v^-(t,\cdot;u_0,v_0)) $$ for $t\geq \tilde T$. Then there is $\epsilon_1>0$ such
that $$ (\epsilon_1 v^*_-,(1+\epsilon_2)v^*_-)\leq_2 (u^-(t,\cdot;u_0,v_0),v^-(t,\cdot;u_0,v_0))
$$ for $t\gg 1$. Thus we have $$ (u^-(t,\cdot;u_0,v_0),v^-(t,\cdot;u_0,v_0))\to
(u^*_-,0) $$ as $t\to\infty$, and the claim is proved.

For any given $(u_0,v_0)\in X_1^+\times X_1^+$, by Proposition \ref{comparison-prop}, $$
(u^-(t,\cdot;u_0,v_0),v^-(t,\cdot;u_0,v_0))\leq_2 (u(t,\cdot;u_0,v_0),v(t,\cdot;u_0,v_0))\quad \forall t>0.
$$ By the above claim, for any $(u_0,v_0)\in (X_1^+\setminus\{0\})\times (X_1^+\setminus\{0\})$,
$$ \lim_{t\to\infty} (u^-(t,\cdot;u_0,v_0),v^-(t,\cdot;u_0,v_0))=(u_-^*,0). $$
This together with Proposition \ref{semi-trivial-prop} implies that $$ \lim_{t\to\infty} \big[(u(t,\cdot;u_0,v_0),v(t,\cdot;u_0,v_0))-(u^*(t,\cdot),0)\big]=(0,0).
$$

(2) can be proved by the similar arguments as in (1).
\end{proof}

\end{document}